\newtheorem{theorem}{Theorem}
\newtheorem*{theorem*}{Theorem}
\newtheorem{definition}[theorem]{Definition}
\newtheorem{lemma}[theorem]{Lemma}
\newtheorem{proposition}[theorem]{Proposition}
\theoremstyle{definition}
\newtheorem{example}[theorem]{Example}
\newtheorem{remark}[theorem]{Remark}
\newtheorem*{remark*}{Remark}
\begin{document}

\author{J.J. S\'anchez-Gabites}
\title[A dynamical interpretation of the connection map]{A dynamical interpretation of the connection map of an attractor-repeller decomposition}

\address{Facultad de Ciencias Matem{\'a}ticas. Universidad Complutense de Madrid. 28040 Madrid (Espa{\~{n}}a)}
\email{jajsanch@ucm.es}

\subjclass[2020]{37B30, 37B35, 55N99}
\keywords{Conley index, attractor-repeller decomposition, connection homomorphism}
\thanks{The author is supported by the Spanish Ministerio de Ciencia e Innovaci\'on through grants PID2021-126124NB-I00 and RYC2018-025843-I}

\begin{abstract} In Conley index theory one may study an invariant set $S$ by decomposing it into an attractor $A$, a repeller $R$, and the orbits connecting the two. The Conley indices of $S$, $A$ and $R$ fit into an exact sequence where a certain connection homomorphism $\Gamma$ plays an important role. In this paper we provide a dynamical interpretation of this map. Roughly, $R$ ``emits'' an element of its Conley index as a ``wavefront'', part of which intersects the connecting orbits in $S$. This subset of the wavefront evolves towards $A$ and is then ``received'' by it to produce an element in its Conley index.
\end{abstract}

\maketitle

\section{Introduction}

Consider a continuous or discrete dynamical system on a manifold. A compact invariant set $S$ is isolated in the sense of Conley if it is the largest invariant subset of some small neighbourhood of itself. Any isolated invariant set $S$ has a well defined homological Conley index: this is a (graded) vector space $CH_*(S)$ which, roughly speaking, captures the homology of the local unstable set of $S$. In this sense the Conley index provides a partial description of the local dynamics in a neighbourhood of $S$. When the dynamics is discrete $CH_*(S)$ is endowed with a distinguished isomorphism, but this will play no role for the moment. 

An attractor-repeller decomposition of a compact invariant set $S$ consists of two disjoint compact invariant sets $A, R \subseteq S$ such that the orbit of every point in $S \setminus (A \cup R)$ approaches $R$ in the past and $A$ in the future. These are called the connecting orbits (from $R$ to $A$). When $S$ is isolated the sets $A$ and $R$ are also isolated so they all have well defined Conley indices, and it can be shown that they all fit into an exact sequence \[\xymatrix{\ldots \ar[r] & CH_q(A) \ar[r] & CH_q(S) \ar[r] & CH_q(R) \ar[r]^-{\Gamma_q} & CH_{q-1}(A) \ar[r] & \ldots}\] called the exact sequence of the attractor-repeller decomposition. This leads immediately to the equation $\dim CH_q(A) + \dim CH_q(R) = \dim CH_q(S) + (\dim {\rm im}\ \Gamma_q + \dim {\rm im}\ \Gamma_{q+1})$, and inductively to the so-called Morse equations of a Morse decomposition, one of the cornerstones of Conley index theory (see \cite{conleyzehnder1} for the case of flows and \cite{mro1} for the discrete case).

In order to extract dynamical information from the above sequence (or, by extension, the Morse equations of a Morse decomposition) it would be desirable to have a dynamical interpretation of the map $\Gamma$, called the connection homomorphism. More specifically, one would like to understand what information about the set of connecting orbits it captures. It is well known (and easy to prove) that when there are no connecting orbits between $R$ and $A$ (i.e. $S = A \cup R$) then $\Gamma = 0$. Beyond this, and even when the structure of the set of connecting orbits is very simple, the computation of $\Gamma$ is nontrivial. In this paper we will obtain an expression of $\Gamma$ as a composition of three maps each of which has a (hopefully) clear intuitive meaning. This provides a dynamical interpretation of the map $\Gamma$ and a convenient way to compute it.

We shall begin by describing two processes that are associated to any isolated invariant set and have nothing to do with attractor-repeller decompositions: an isolated invariant set can ``emit'' elements from its Conley index along its unstable manifold and can also ``receive'' chains in phase space lying in an infinitesimal neighbourhood of its stable manifold to produce elements in its Conley index. These two processes will be encoded by (a family of) maps denoted by $\partial$ and $\Omega$ respectively. Using these ideas, a crude description of $\Gamma$ goes as follows. An element in the Conley index $CH_*(R)$ of the repeller $R$ gets emitted along its unstable manifold via the map $\partial$; one may picture a sort of ``wavefront'' generated by $R$ which propagates through phase space carried by the dynamics. Part of the wavefront intersects the connecting orbits in $S$; it is thus a subset of the stable manifold of $A$. An infinitesimal neighbourhood of this part of the wavefront is received by $A$ via the map $\Omega$ to produce an element in $CH_*(A)$. We will show that the resulting map $CH_*(R) \longrightarrow CH_*(A)$ is the connection homomorphism $\Gamma$. More formally, Theorem \ref{teo:connecting} establishes that $\Gamma_q$ factors as a composition \[\xymatrix{CH_q(R) \ar[r]^-{\partial} & H_{q-1}(F) \ar[r]^-{\tau} & H_{q-1}(F,F \backslash S) \ar[r]^-{\Omega} & CH_{q-1}(A)}\] where one may think of $F$ as a set containing the wavefronts emitted by $R$. The map $\tau$ is the homomorphism induced in homology by the inclusion of $F$ in $(F,F \setminus S)$, so it trims wavefronts in $F$ down to an infinitesimal neighbourhood of $F \cap S$. Thus the equality $\Gamma = \Omega \circ \tau \circ \partial$ accords with the informal description of $\Gamma$ given above.

In the decomposition $\Gamma = \Omega \circ \tau \circ \partial$ the maps $\Omega$ and $\partial$ are completely unrelated to the fact that $A$ and $R$ are part of an attractor-repeller pair; they just capture the ``emitter'' role of $R$ and the ``receiver'' role of $A$. The map $\tau$, on the other hand, has nothing to do with dynamics and only records the geometric information of how the connecting orbits intersect the set of wavefronts $F$. This decoupling of $\Gamma$ is useful because it allows for computations with incomplete information (as is often the case), and we will illustrate this by obtaining simple proofs of some results of McCord \cite{mccord2} just by inspection. It also sheds some light on our initial question regarding what information about the set of connecting orbits is carried by $\Gamma$. This set appears in $\Gamma = \Omega \circ \tau \circ \partial$ only through the homology group $H_{q-1}(F,F \setminus S)$; one can immediately write $\dim {\rm im}\ \Gamma_q \leq \dim H_{q-1}(F, F \setminus S)$. Focus on the set $F \cap S$, which is roughly a section of the set of connecting orbits in $S$. In practice $F$ is often a manifold (perhaps slightly thickened), say of dimension $d-1$, and then by Alexander duality $\dim H_{q-1}(F, F \setminus S) = \dim H^{d-q}(F \cap S)$ so in particular $\dim H^{d-q}(F \cap S) \geq \dim {\rm im}\ \Gamma_q$. Thus $\Gamma_q$ provides a lower bound on the geometric complexity of $F \cap S$ and therefore also on the structure of the set of connecting orbits.

We will work with discrete dynamics generated by a continuous map, not necessarily a homeomorphism. Everything is equally valid (and simpler) for flows or semiflows and the adaptation is straightforward. The definition of both $\partial$ and $\Omega$ involves a sort of $\omega$-limit process which, though dynamically clear, requires a certain amount of algebra for its formalization. To a lesser extent, so does the definition of the discrete Conley index. This makes the whole paper slightly technical but should not obscure the dynamical interpretation. Section \ref{sec:algebraprelim} introduces the necessary algebraic machinery. In Section \ref{sec:conley} we recall the definition of the discrete Conley index. The maps $\partial$ and $\Omega$ are defined in Sections \ref{sec:emitters} and \ref{sec:receivers} respectively, giving first an example to motivate their definition and bridge the gap between dynamical intuition and algebraic formalism. Finally, in Section \ref{sec:ARdec} we obtain the decomposition of $\Gamma$ announced above.

\section{Preliminaries from linear algebra} \label{sec:algebraprelim}

Throughout this whole section $V$ is a vector space over some field $\mathbb{K}$ and $\phi : V \longrightarrow V$ is an endomorphism. At the end we shall discuss briefly the case when $V$ is an Abelian group.

\subsection{} For any polynomial $P(x) = \sum_{j=0}^d a_j x^j \in \mathbb{K}[x]$ we may construct the endomorphism $P(\phi) := \sum_{j=0}^d a_j \phi^j$ of $V$. If $v$ is an element in $V$, we abbreviate $P(\phi)(v)$ by $P \cdot v$. With this notation $\phi(v)$ is simply $x \cdot v$. The following two properties are straightforward to check: $(P+Q) \cdot v = P \cdot v + Q \cdot v$ and $P \cdot (Q \cdot v) = (PQ) \cdot v$.

We say that an element $v \in V$ is \emph{algebraic} if it is a ``root of a polynomial''; i.e. if there exists a polynomial $P \in \mathbb{K}[x]$ with independent term equal to $1$ (we then say that $P$ is normalized) and such that $P \cdot v = 0$. For example, if $V$ is finite dimensional and $\phi$ is an isomorphism, every $v \in V$ is algebraic by the Cayley-Hamilton theorem.

\begin{proposition} \label{prop:alg} Let $AV$ be the set of algebraic elements of $V$. Then:
\begin{itemize}
	\item[(i)] $AV$ is a subspace of $V$.
	\item[(ii)] If $Q$ is any polynomial and $v \in AV$ then $Q \cdot v \in AV$.
	\item[(iii)] $\phi$ restricts to an isomorphism $\phi|_{AV} : AV \longrightarrow AV$.
\end{itemize}
\end{proposition}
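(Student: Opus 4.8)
The plan is to prove the three statements essentially directly from the definitions, exploiting the ring-action formalism $P \cdot v$ already established, together with the two distributive/associative laws $(P+Q)\cdot v = P\cdot v + Q\cdot v$ and $P\cdot(Q\cdot v) = (PQ)\cdot v$. The key technical device throughout will be that the product of two normalized polynomials is again normalized (its independent term is the product of the independent terms, hence $1\cdot 1 = 1$), and that scaling a normalized polynomial by a nonzero constant does not disturb the property of having \emph{some} normalized annihilator.

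For (i), given $v, w \in AV$ with normalized annihilators $P$ and $Q$ respectively, I would consider $R := PQ$. Then $R \cdot v = Q \cdot (P \cdot v) = Q \cdot 0 = 0$ (using commutativity of $\mathbb{K}[x]$ and associativity), and likewise $R \cdot w = P \cdot (Q \cdot w) = 0$, so $R \cdot (v + w) = R\cdot v + R \cdot w = 0$; since $R$ is normalized, $v + w \in AV$. Closure under scalar multiplication is immediate since $P \cdot (\lambda v) = \lambda (P \cdot v) = 0$ with the \emph{same} $P$. For (ii), if $P$ is a normalized annihilator of $v$, then for any polynomial $Q$ we have $P \cdot (Q \cdot v) = (PQ) \cdot v = (QP) \cdot v = Q \cdot (P \cdot v) = Q \cdot 0 = 0$, so $P$ itself also annihilates $Q \cdot v$; hence $Q \cdot v \in AV$.

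For (iii), by (ii) (taking $Q = x$) we already know $\phi(AV) \subseteq AV$, so $\phi$ restricts to an endomorphism of $AV$; it remains to check this restriction is injective and surjective. The crucial observation is that if $v \in AV$ has normalized annihilator $P(x) = 1 + a_1 x + \dots + a_d x^d$, then writing $P(x) = 1 + x\, Q(x)$ with $Q(x) = a_1 + a_2 x + \dots + a_d x^{d-1}$, the relation $P \cdot v = 0$ reads $v + \phi(Q \cdot v) = 0$, i.e. $v = \phi\bigl(-(Q\cdot v)\bigr) = -\phi(Q\cdot v)$. This immediately gives surjectivity: $-(Q\cdot v)$ lies in $AV$ by (ii) and maps to $v$ under $\phi$. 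For injectivity, suppose $v \in AV$ with $\phi(v) = 0$; using the same relation $v = -\phi(Q \cdot v) = -(Q \cdot \phi(v)) = -(Q \cdot 0) = 0$, where I have used that $\phi$ commutes with the action of $Q$ (both being polynomials in $\phi$). Hence $\phi|_{AV}$ is an isomorphism.

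I do not anticipate a genuine obstacle here — the statement is a clean piece of linear algebra — but the one point requiring a little care is the bookkeeping in (iii): one must split the normalized annihilator as $1 + x Q(x)$ and correctly track that the ``inverse candidate'' $-(Q\cdot v)$ stays inside $AV$, which is exactly what (ii) guarantees. A secondary point worth a sentence is the remark promised at the end of the subsection about the abelian-group case: there one should replace ``normalized polynomial $P$ with independent term $1$'' by the condition that the independent term be a unit (or, over $\mathbb{Z}$, equal to $\pm 1$), since the argument in (iii) only needs $v$ to be recoverable as $\phi$ applied to an element built from $v$, which forces the constant term to be invertible; I would flag this but not belabor it.
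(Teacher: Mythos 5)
Your proof is correct and follows essentially the same route as the paper's: the product of normalized annihilators for (i), commutativity of the polynomial action for (ii), and the splitting of the normalized annihilator as $1 + xQ(x)$ (the paper writes $1 - xQ$, an immaterial sign choice) to exhibit both surjectivity and injectivity of $\phi|_{AV}$ in (iii). Your closing remark about the abelian-group case also matches the paper's later observation that normalizability (invertibility of the constant term) is exactly what can fail over $\mathbb{Z}$.
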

\begin{proof} (i) If $v,w$ are annihilated by normalized polynomials $P$ and $Q$ then the normalized polynomial $PQ$ annihilates any linear combination of $v$ and $w$.

(ii) If the normalized polynomial $P$ annihilates $v$ it also annihilates $Q \cdot v$ since $P \cdot (Q \cdot v) = (PQ) \cdot v = Q \cdot (P \cdot v) = 0$.

(iii) Notice that $\phi(v) = x \cdot v$, so $\phi(AV) \subseteq AV$ by part (ii) applied to the polynomial $Q = x$. Now let $v \in AV$. Pick a normalized polynomial $P$ such that $P \cdot v = 0$. Writing $P = 1 -xQ$ the condition $P \cdot v= 0$ becomes $v = xQ \cdot v$. This implies that $v = \phi(Q \cdot v)$. Since $Q \cdot v$ is algebraic by part (ii), we see that $\phi|_{AV} : AV \longrightarrow AV$ is indeed surjective. To check that $\phi|_{AV}$ is injective assume $\phi(v) = 0$. This means $x \cdot v = 0$, and multiplying by $Q$ we have $0 = Q \cdot (x \cdot v) = (xQ) \cdot v = v$.
\end{proof}

In this paper it will always be the case that every element in $V$ is eventually algebraic, in the sense that for each $v \in V$ there exists $k$ (depending on $v$) such that $\phi^k(v) \in AV$. For example, if $\dim V = d$ is finite then this holds true: writing the characteristic polynomial of $\phi$ as $P = x^d + \ldots + a_k x^k$ with $a_k \neq 0$, it can be factored as $P =a_k x^k P'$ where $P'$ is normalized, and then $P \cdot v = 0$ implies that $P' \cdot (\phi^k(v)) = 0$ for every $v \in V$, so that $\phi^k(v)$ is algebraic. Notice that this also gives a method to compute $AV$: since $\phi^k(V) \subseteq AV$ and $\phi|_{AV}$ is an isomorphism of $AV$ we have $AV = \phi^d(AV) \subseteq \phi^d(V) \subseteq \phi^k(V) \subseteq AV$ which shows that $AV = {\rm im}\ \phi^d$.

Assuming that every element in $V$ is eventually algebraic we define a map $\omega : V \longrightarrow AV$ as follows: for $v \in V$, pick $k$ such that $\phi^k(v)\in AV$ and set $\omega(v) := (\phi|_{AV})^{-k} \circ \phi^k(v)$. \label{pg:can_proj} This is easily checked to be independent of $k$ since once $\phi^k(v)$ belongs to $AV$, further iteration under $\phi$ is the same as further iteration under $\phi|_{AV}$ and this cancels out with the $(\phi|_{AV})^{-k}$ term. Thus $\omega$ is a well defined linear map $V \longrightarrow AV$. Clearly $\omega|_{AV}$ is the identity (choose $k = 0$) so $\omega \circ \omega = \omega$. Also, since $\phi|_{AV}$ is an isomorphism, $\omega(v) = 0$ if and only if $\phi^k(v) = 0$ for large enough $k$. Recall that the set of $v \in V$ such that $\phi^k(v) = 0$ for some $k \geq 0$ is called the generalized kernel of $\phi$ and denoted by ${\rm gker}\ \phi$. Thus $\omega$ is a projection of $V$ onto $AV$ along the direction ${\rm gker}\ \phi$; in particular, there is a direct sum decomposition \[V = {\rm gker}\ \phi \oplus AV,\] which we call the canonical decomposition of $V$. The projection $\omega$ is called the canonical projection.

\subsection{} \label{subsec:shift} Let $V$ and $V'$ be vector spaces equipped with endomorphisms $\phi$ and $\phi'$. A map $\rho : V \longrightarrow V'$ is called equivariant if it commutes with the dynamics; i.e. $\rho \circ \phi = \phi' \circ \rho$. An equivariant map carries algebraic elements to algebraic elements: if $v \in V$ is annihilated by the normalized polynomial $P$, so is its image $\rho(v)$. A shift equivalence between $(V,\phi)$ and $(V',\phi')$ is a pair of equivariant maps \[\xymatrix{V \ar@<.5ex>[r]^{\rho} & V' \ar@<.5ex>[l]^{\sigma}}\] such that $\sigma \circ \rho = \phi^m$ and $\rho \circ \sigma = (\phi')^m$ for some $m \geq 0$ (see \cite[Section 4, pp. 3310ff.]{franksricheson1}). We will often abuse terminology and say that $\rho$ is a shift equivalence between $V$ and $V'$, omitting $\sigma$ and the maps $\phi$ and $\phi'$. 
The basic example of a shift equivalence is as follows: if $\dim V = d < +\infty$ we saw above that $\phi^d(v) \in AV$ for every $v \in V$; then $\rho := \phi^d : V \longrightarrow AV$ and $\sigma : AV \subseteq V$ define a shift equivalence between $V$ and $AV$.

The following proposition shows that shift equivalences preserves the notions defined above:

\begin{proposition} \label{prop:inv_shift} Let $\rho,\sigma$ be a shift equivalence from $(V,\phi)$ to $(V',\phi')$. Then:
\begin{itemize}
    \item[(i)] $\rho$ carries $AV$ isomorphically onto $AV'$.
    \item[(ii)] If $V$ admits a canonical decomposition, so does $V'$.
\end{itemize}
\end{proposition}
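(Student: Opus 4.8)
The plan is to reduce everything to three facts already at hand: an equivariant map sends algebraic elements to algebraic elements, so $\rho$ and $\sigma$ restrict to maps $\rho|_{AV}\colon AV \to AV'$ and $\sigma|_{AV'}\colon AV' \to AV$; the operator $\phi$ restricts to an \emph{isomorphism} of $AV$ by Proposition \ref{prop:alg}(iii), and likewise $\phi'$ of $AV'$; and a space admits a canonical decomposition exactly when every one of its elements is eventually algebraic.

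For part (i), I would restrict the two shift-equivalence identities to the subspaces of algebraic elements. Since $\rho\circ\phi = \phi'\circ\rho$ and $\sigma\circ\phi' = \phi\circ\sigma$, the relations $\sigma\circ\rho = \phi^m$ and $\rho\circ\sigma = (\phi')^m$ restrict to
\[\sigma|_{AV'}\circ\rho|_{AV} = (\phi|_{AV})^m, \qquad \rho|_{AV}\circ\sigma|_{AV'} = (\phi'|_{AV'})^m,\]
and by Proposition \ref{prop:alg}(iii) the right-hand sides are isomorphisms of $AV$ and $AV'$ respectively. The first identity forces $\rho|_{AV}$ to be injective; the second shows that the image of $\rho|_{AV}$ contains the image of a surjection onto $AV'$, hence $\rho|_{AV}$ is onto. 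Therefore $\rho|_{AV}\colon AV \to AV'$ is an isomorphism, which is precisely the assertion.

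For part (ii), assume every element of $V$ is eventually algebraic and take an arbitrary $v' \in V'$. I would first pull it back to $V$ via $\sigma$: by hypothesis there is $k$ with $\phi^k(\sigma(v'))\in AV$. Pushing forward with $\rho$, using equivariance together with $\rho\circ\sigma = (\phi')^m$, one gets $(\phi')^{k+m}(v') = (\phi')^k\bigl(\rho(\sigma(v'))\bigr) = \rho\bigl(\phi^k(\sigma(v'))\bigr) \in \rho(AV) \subseteq AV'$. Thus $v'$ is eventually algebraic, and since $v'$ was arbitrary, $V'$ admits a canonical decomposition.

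The argument is essentially diagram-chasing and I do not expect a genuine obstacle; the one point that needs attention is in (i), where one must pass to the restricted maps $\rho|_{AV}$ and $\sigma|_{AV'}$ and invoke Proposition \ref{prop:alg}(iii) to guarantee that the restricted compositions are bona fide isomorphisms rather than mere iterates of possibly non-invertible operators — it is exactly this invertibility that upgrades the two one-sided relations into injectivity and surjectivity of $\rho|_{AV}$.
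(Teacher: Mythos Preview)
Your proof is correct and follows essentially the same route as the paper's: restrict the shift-equivalence identities to the algebraic subspaces, use Proposition~\ref{prop:alg}(iii) to see the restricted $\phi^m$ and $(\phi')^m$ are isomorphisms, and read off injectivity and surjectivity of $\rho|_{AV}$; for (ii) the paper likewise pulls $v'$ back by $\sigma$, finds $k$ with $\phi^k(\sigma(v'))\in AV$, and pushes forward by $\rho$ to get $(\phi')^{k+m}(v')\in AV'$.
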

\begin{proof} (i) Both $\rho$ and $\sigma$ carry algebraic elements to algebraic elements since they are equivariant. Therefore from $\sigma \circ \rho = \phi^m$ we have $\sigma|_{AV'} \circ \rho|_{AV} = \phi|_{AV}^m$. Since $\phi|_{AV}$ is an isomorphism, $\rho|_{AV}$ is injective and $\sigma|_{AV'}$ is surjective. Looking at the composition in the reverse order we have that $\rho|_{AV}$ is surjective, so it is an isomorphism. In fact its inverse is easily checked to be  $\phi|_{AV}^{-m} \circ \sigma$.

(ii) We need to show that every $v' \in V'$ is eventually algebraic. By assumption $\sigma(v') \in V$ is eventually algebraic, so there exists $k$ such that $\phi^k(\sigma(v')) \in AV$. Then $\rho(\phi^k(\sigma(v'))) \in \rho(AV) = AV'$. Using $\rho \circ \phi^k \circ \sigma = \rho \circ \sigma \circ {\phi'}^k = {\phi'}^m \circ {\phi'}^k$, we finally get  ${\phi'}^{k+m}(v') \in  AV'$.
\end{proof}

\subsection{} Consider a sequence of vector spaces \[\xymatrix{V_1 \ar[r]^{\alpha_1} & V_2 \ar[r]^{\alpha_2} & V_3}\] where each $V_i$ is equipped with an endomorphism $\phi_i$ and the $\alpha_i$ are equivariant.

\begin{proposition} \label{prop:exact} Assume the above sequence is exact at $V_2$ and both $V_1$ and $V_3$ have canonical decompositions. Then the same is true of $V_2$. Moreover, the sequence of algebraic parts \[\xymatrix{AV_1 \ar[r]^{\alpha_1|} & AV_2 \ar[r]^{\alpha_2|} & AV_3}\] is exact at $AV_2$. (Here $\alpha_i|$ means the restriction of $\alpha_i$ to the algebraic part of its source space).
\end{proposition}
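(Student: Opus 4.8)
The plan is to establish the two assertions in turn, deriving (i) by a diagram chase with polynomial annihilators and then obtaining (ii) as a formal consequence.

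For (i) I would fix $v \in V_2$ and produce an exponent $k$ with $\phi_2^k(v)$ algebraic. Since $\dim V_2$ need not be finite, Cayley--Hamilton is unavailable and one must chase the sequence. First push forward: $\alpha_2(v) \in V_3$ is eventually algebraic, so there are $k_1 \geq 0$ and a normalized polynomial $Q$ with $Q \cdot \phi_3^{k_1}(\alpha_2(v)) = 0$; by equivariance this reads $\alpha_2\bigl(Q \cdot \phi_2^{k_1}(v)\bigr) = 0$, hence $Q \cdot \phi_2^{k_1}(v) \in \ker \alpha_2 = {\rm im}\ \alpha_1$ by exactness at $V_2$. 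Choose $w \in V_1$ with $\alpha_1(w) = Q \cdot \phi_2^{k_1}(v)$. As $V_1$ admits a canonical decomposition, $\phi_1^{k_2}(w) \in AV_1$ for some $k_2 \geq 0$, so some normalized polynomial $R$ annihilates it. Applying $\alpha_1$, using equivariance and the fact that $Q(\phi_2)$ commutes with powers of $\phi_2$, one gets $(RQ) \cdot \phi_2^{k_1+k_2}(v) = \alpha_1\bigl(R \cdot \phi_1^{k_2}(w)\bigr) = 0$. Since $RQ$ is again normalized (its independent term is the product of those of $R$ and $Q$, hence $1$), $\phi_2^{k_1+k_2}(v)$ is algebraic. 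Thus every element of $V_2$ is eventually algebraic and $V_2 = {\rm gker}\ \phi_2 \oplus AV_2$.

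For (ii), note first that equivariance forces each $\alpha_i$ to respect both summands of the canonical decompositions: $\alpha_i({\rm gker}\ \phi_i) \subseteq {\rm gker}\ \phi_{i+1}$ is immediate, and $\alpha_i(AV_i) \subseteq AV_{i+1}$ holds because equivariant maps carry algebraic elements to algebraic elements. The inclusion ${\rm im}\ \alpha_1| \subseteq \ker \alpha_2|$ then follows from $\alpha_2 \circ \alpha_1 = 0$. For the reverse inclusion take $v \in AV_2$ with $\alpha_2(v) = 0$; exactness of the original sequence gives $w \in V_1$ with $\alpha_1(w) = v$, and decomposing $w = g + a$ along $V_1 = {\rm gker}\ \phi_1 \oplus AV_1$ yields $v = \alpha_1(g) + \alpha_1(a)$ with $\alpha_1(g) \in {\rm gker}\ \phi_2$ and $\alpha_1(a) \in AV_2$. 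Since $v$ lies in the summand $AV_2$ and the decomposition of $V_2$ is direct, $\alpha_1(g) = 0$ and $v = \alpha_1(a) \in {\rm im}\ \alpha_1|$. (Structurally: with respect to the canonical decompositions the sequence at $V_2$ splits as the direct sum of the sequence of generalized kernels and the sequence of algebraic parts, and a direct sum of sequences is exact at its middle term precisely when each summand is.)

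I expect the only real obstacle to be the bookkeeping in the chase for (i): one must track which powers of $\phi_2$ appear, slide a polynomial past powers of $\phi_2$ to consolidate everything on a single iterate $\phi_2^{k_1+k_2}(v)$, and confirm that the product of normalized polynomials remains normalized. Once (i) is secured, part (ii) is purely formal.
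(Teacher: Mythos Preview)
Your proof is correct and matches the paper's argument closely; the only difference is that you establish the canonical decomposition of $V_2$ first and then invoke it for the exactness of the algebraic parts, whereas the paper proves exactness first (relying only on the injectivity of $\phi_2|_{AV_2}$, which holds regardless of whether $V_2$ has a full canonical decomposition) and the canonical decomposition second. The underlying diagram chases are essentially identical.
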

\begin{proof} Observe that the sequence of algebraic parts makes sense because the $\alpha_i$ are equivariant and therefore carry algebraic elements to algebraic elements. We prove that it is exact at $AV_2$. Clearly ${\rm im}\ \alpha_1| \subseteq \ker \alpha_2|$ since the same is true in the original exact sequence. To prove the other inclusion let $v_2$ be an algebraic element in $\ker \alpha_2$ and pick $v_1 \in V_1$ such that $\alpha_1(v_1) = v_2$. Decompose $v_1 = v_{1n} + v_{1a}$ canonically, so that $v_{1n} \in {\rm gker}\ \phi_1$ and $v_{1a}$ is algebraic. There exists $k \geq 0$ such that $x^k \cdot v_{1n} = 0$, and then $x^k \cdot v_1 = x^k \cdot v_{1a}$ so \[x^k \cdot \alpha_1(v_{1a}) = \alpha_1(x^k \cdot v_{1a}) = \alpha_1(x^k \cdot v_1) = x^k \cdot \alpha_1(v_1) = x^k \cdot v_2.\] Unravelling the notation, $\phi_2^k(\alpha_1(v_{1a})) = \phi_2^k(v_2)$. Now, both $\alpha_1(v_{1a})$ and $v_2$ belong to $AV_2$ and, since $\phi_2|_{AV_2}$ is an isomorphism, $\alpha_1(v_{1a}) = v_2$. Thus every algebraic element in $\ker \alpha_2$ is indeed the image of an algebraic element of $V_1$ via $\alpha_1$.

Now we prove that every element in $V_2$ is eventually algebraic and so $V_2$ has a canonical decomposition. Pick $v_2$ in $V_2$. Since $V_3$ has a canonical decomposition, there exist an integer $k_3 \geq 0$ and a normalized polynomial $P_3$ such that $x^{k_3}P_3 \cdot \alpha_2(v_2) = 0$. Using the equivariance of $\alpha_2$ we may rewrite this as $\alpha_2(x^{k_3}P_3 \cdot v_2) = 0$, and then exactness at $V_2$ ensures that there exists $v_1 \in V_1$ such that $\alpha_1(v_1) = x^{k_3}P_3 \cdot v_2$. Again since $V_1$ has a canonical decomposition, there exist an integer $k_1 \geq 0$ and a normalized polynomial $P_1$ such that $x^{k_1}P_1 \cdot v_1 = 0$. Acting with $\alpha_1$ on both sides of this yields $0 = x^{k_1}P_1 \cdot \alpha_1(v_1) = x^{k_1+k_3}P_1P_3 \cdot v_2$. Since $P_1 P_3$ is normalized, we have shown that $\phi_2^{k_1+k_2}(v_2)$ is algebraic. In other words, the algebraic part $AV_2$ of $V_2$ eventually absorbs any vector in $V_2$.
\end{proof}

\subsection{} The algebraic machinery developed in this Section works the same for Abelian groups instead of vector spaces. The only difference is that for Abelian groups it is no longer true that every finitely generated group $V$ admits a canonical decomposition: $\mathbb{Z}$ endowed with the map $\phi(v) = 2v$ is a simple example. Every element is annihilated by the polynomial $x-2$, but this is not normalized (nor can be normalized in $\mathbb{Z}[x]$).

\section{The discrete Conley index \label{sec:conley}}

The Conley index was originally defined in the context of flows \cite{conley1} and extended to discrete dynamics in several ways by different authors: Robbin and Salamon \cite{robinsalamon1}, Mrozek and Rybakowski \cite{mro2}, \cite{mroryb1}, Szymczak \cite{szymczak1}, and Franks and Richeson \cite{franksricheson1}.  In our setting all these definitions produce essentially the same object and we shall use that of Franks and Richeson for convenience, with slight changes in terminology.

\subsection{Background definitions} Consider a dynamical system generated by iterating a continuous map $f : M \longrightarrow M$ on some phase space $M$. A (full) orbit through a point $x \in M$ is a sequence $(x_n)_{n \in \mathbb{Z}}$ such that $x_0 = x$ and $x_{n+1} =f(x_n)$ for every $n$. Given a compact set $N \subseteq M$, its maximal invariant subset ${\rm Inv}(N)$ is the set of points in $N$ that belong to a full orbit entirely contained in $N$. A set $S \subseteq M$ is called an isolated invariante set if it has a compact neighbourhood $N$ such that $S = {\rm Inv}(N)$. The set $S$ is then compact and invariant in the sense that $f(S) = S$, although in general $S \subsetneq f^{-1}(S)$.

The stable manifold of a compact invariant set $S$ is the set $W^s(S)$ of points $x \in M$ whose positive semiorbit converges to $S$. More precisely, $x \in W^s(S)$ if and only if for every neighbourhood $U$ of $S$ in $M$ there exists $n_0 \in \mathbb{N}$ such that $(f^n(x))_{n \geq n_0} \subseteq U$. The unstable manifold $W^u(S)$ of $S$ is the dual notion but, since the dynamics is not injective, it has a slightly different definition: $x \in W^u(S)$ if and only if there exists a negative semiorbit $(x_n)_{n \leq 0}$ through $x$ such that for every neighbourhood $U$ of $S$ there exists $n_0$ with $(x_n)_{n \leq n_0} \subseteq U$. The terminology ``stable/unstable manifold'' is customary but in general these sets are not manifolds at all.

\subsection{The Conley index} Let $N$ be a closed subset of phase space. The immediate exit set of $N$ is the set of points $x \in N$ such that $f(x) \not\in {\rm int}\ N$. A filtration pair $(N,L)$ for an isolated invariant set $S$ is a compact pair $(N,L)$ such that both $N$ and $L$ are the closure of their interiors and:
\begin{itemize}
	\item[(i)] The maximal invariant subset of $\overline{N \backslash L}$ is precisely $S$.
	\item[(ii)] $L$ is a neighbourhood (in $N$) of the immediate exit set of $N$.
	\item[(iii)] $f(L) \cap \overline{N \backslash L} = \emptyset$.
\end{itemize}

Consider the pointed space $(N/L,[L])$. It can be shown that the map $f$ induces a continuous map $f_{N/L} : (N/L,[L]) \longrightarrow (N/L,[L])$ given by \[f_{N/L}([x]) := \left\{ \begin{array}{ll} [f(x)] & \text{ if } f(x) \in N \backslash L \\ {[L]} & \text{ otherwise} \end{array} \right.\] Thus we have a new dynamical system in $N/L$ which is essentially a localization of $f$ around $S$.

\begin{remark} When $L = \emptyset$ (which can happen only when $N$ is a trapping region and $S$ is the attractor inside it) the quotient space $N/L$ is defined to be $N$ union an isolated point $[L]$. This is done so that $N \setminus L$ is homeomorphic to $N/L \setminus \{[L]\}$ via the canonical projection for any $L$, even empty.
\end{remark}

Suppose that $(N,L)$ and $(N',L')$ are two different filtration pairs for the same set $S$. We will make extensive use of the fact that the pointed spaces $(N/L,[L])$ and $(N'/L',[L'])$ are related by a shift equivalence (\cite[Theorem 4.3, pp. 3311]{franksricheson1}). Explicitly, there exists a pair of equivariant continuous maps \[\xymatrix{(N/L,[L]) \ar@<.5ex>[r]^r & (N'/L',[L']) \ar@<.5ex>[l]^s}\] such that $s \circ r = f_{N/L}^m$ and $r \circ s = f_{N'/L'}^m$ for some integer $m \geq 0$. This notion of a shift equivalence is completely analogous to the one in Subsection \ref{subsec:shift}, only in the category of pointed compact spaces. We record here two particular forms of shift equivalences for later reference:

\begin{example} \label{ex:standard} The shift equivalences $r,s$ can always be chosen to be standard; i.e. have the form \[r([x]) = \left\{ \begin{array}{ll} [f^k(x)] & \text{ if } x \in N \backslash L \text{ and } f^k(x) \in N' \backslash L', \\ {[L']} & \text{ otherwise} \end{array} \right.\] and similarly for $s$ with $k$ replaced with $m-k$.
\end{example} 

\begin{example} \label{ex:inclusion} Assume that $N \subseteq N'$ and $L \subseteq L'$. Then the natural map $r : N/L \longrightarrow N'/L'$ given by $[x] \longmapsto [x]$ is a shift equivalence.
\end{example}
\begin{proof} Notice that there exist $k,\ell \geq 0$ such that:
\begin{itemize}
    \item [(i)] for every $x \in N'$, the trajectory segment $x,\ldots,f^k(x)$ visits $N$ or $L'$ (or both).
    \item[(ii)] for every $x \in N \cap L'$, the trajectory segment $x,\ldots,f^{\ell}(x)$ visits $L$.
\end{itemize}

Define $s : N'/L' \longrightarrow N/L$ as follows: \[s([x]) = \left\{ \begin{array}{ll} [f^{k+\ell}(x)] & \text{ if } x,\ldots,f^k(x) \in N' \setminus L' \text{ and } f^k(x),\ldots, f^{k+\ell}(x) \in N \setminus L, \\ {[L]} & \text{ otherwise.} \end{array} \right.\] 

Then $s$ is a shift inverse for $r$.
\end{proof}

Let $S$ be an isolated invariant set and $(N,L)$ a filtration pair for $S$. The Conley index of $S$ is defined as the shift equivalence class of $(N/L,[L])$ endowed with the map $f_{N/L}$. This depends only on $S$ but not on the filtration pair $(N,L)$. For computations one usually passes to homology: the homological Conley index of $S$ is the shift equivalence class (now in the category of vector spaces) of $H_*(N/L,[L])$ endowed with the endomorphism $(f_{N/L})_*$. Here $H_*$ means homology with coefficients in some field, typically $\mathbb{Q}$ or $\mathbb{C}$. We denote the homological Conley index of $S$ by $CH_*(S)$.

\subsection{} We introduce the following notation. For any filtration pair $(N,L)$ for $S$, we denote by $CH_*(N/L)$ the algebraic part of $H_*(N/L,[L])$ and by $\phi_{N/L}$ the restriction of $(f_{N/L})_*$ to $CH_*(N/L)$, which is an isomorphism by Proposition \ref{prop:alg}.

We make the following observations:

(i) Suppose $H_*(N/L,[L])$ is finite dimensional. Then by the discussion in Section \ref{sec:algebraprelim} it has a canonical decomposition $H_*(N/L,[L]) = {\rm gker}\ (f_{N/L})_* \oplus CH_*(N/L)$. Moreover, $(H_*(N/L,[L]),(f_{N/L})_*)$ is shift equivalent to $(CH_*(N/L),\phi_{N/L})$ and so $CH_*(N/L)$ endowed with $\phi_{N/L}$ is (a representative of) the homological Conley index of $S$.

(ii) Let $(N',L')$ be another filtration pair for $S$ and pick a shift equivalence $r : N/L \longrightarrow N'/L'$. Then $r_* : H_*(N/L,[L]) \longrightarrow H_*(N'/L',[L'])$ is a shift equivalence of vector spaces, and so by Proposition \ref{prop:inv_shift} it restricts to an isomorphism between $CH_*(N/L)$ and $CH_*(N'/L')$ which conjugates $\phi_{N/L}$ and $\phi_{N'/L'}$. Also, since $H_*(N/L,[L])$ has a canonical decomposition the same is true of $H_*(N'/L',[L'])$.

(iii) When phase space is Euclidean space, or more generally any differentiable or triangulable manifold, every isolated invariant set $S$ has a filtration pair $(N,L)$ such that $N$ and $L$ are manifolds or polyhedra respectively (\cite[Theorem 3.7, p. 3309]{franksricheson1}). In fact, for computations one works almost exclusively with this sort of filtration pairs. For any such pair the homology $H_*(N/L,[L])$ is indeed finite dimensional.

Summarizing, we have the following:

\begin{proposition} \label{prop:W2invariant} Let phase space be Euclidean space or, more generally, a differentiable or triangulable manifold. Let $S$ be an isolated invariant set and $(N,L)$ any filtration pair for $S$. Then the homology $H_*(N/L,[L])$ admits a canonical decomposition \[H_*(N/L,[L]) = {\rm gker}\ (f_{N/L})_* \oplus CH_*(N/L)\] and $CH_*(N/L)$ is (a representative of) the homological Conley index of $S$. Moreover, if $r$ is a shift equivalence between two filtration pairs $(N,L)$ and $(N',L')$, then $r_* : H_*(N/L,[L]) \longrightarrow H_*(N'/L',[L'])$ restricts to an isomorphism between $CH_*(N/L)$ and $CH_*(N'/L')$ which conjugates $\phi_{N/L}$ and $\phi_{N'/L'}$.
\end{proposition}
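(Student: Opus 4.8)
The proposition merely repackages observations (i)--(iii) above, so the plan is simply to make the glue between them explicit. The strategy is to prove everything first for one conveniently chosen filtration pair with finite-dimensional homology, and then to transport the conclusions to an arbitrary filtration pair along a Franks--Richeson shift equivalence by means of Proposition \ref{prop:inv_shift}.

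First I would use observation (iii), i.e.\ \cite[Theorem 3.7]{franksricheson1}, to fix a filtration pair $(N_0,L_0)$ for $S$ with $N_0$ and $L_0$ compact manifolds (resp.\ polyhedra), so that $H_*(N_0/L_0,[L_0])$ is finite dimensional --- a standard fact for compact manifold or polyhedral pairs. For this pair the linear algebra of Section \ref{sec:algebraprelim} applies verbatim: the discussion following Proposition \ref{prop:alg} provides the canonical decomposition $H_*(N_0/L_0,[L_0]) = {\rm gker}\ (f_{N_0/L_0})_* \oplus CH_*(N_0/L_0)$, and the basic shift equivalence of Subsection \ref{subsec:shift} --- namely $\bigl((f_{N_0/L_0})_*\bigr)^{d}$ with $d = \dim H_*(N_0/L_0,[L_0])$, together with the inclusion $CH_*(N_0/L_0) \subseteq H_*(N_0/L_0,[L_0])$ --- exhibits $(CH_*(N_0/L_0),\phi_{N_0/L_0})$ as shift equivalent to $(H_*(N_0/L_0,[L_0]),(f_{N_0/L_0})_*)$, hence as a representative of the homological Conley index of $S$.

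Now let $(N,L)$ be an arbitrary filtration pair for $S$. By \cite[Theorem 4.3]{franksricheson1} there is a shift equivalence $r : N_0/L_0 \longrightarrow N/L$ with shift inverse $s$; applying the homology functor turns $s \circ r = f_{N_0/L_0}^{m}$ and $r \circ s = f_{N/L}^{m}$ into $s_* \circ r_* = \bigl((f_{N_0/L_0})_*\bigr)^{m}$ and $r_* \circ s_* = \bigl((f_{N/L})_*\bigr)^{m}$, so that $r_* : H_*(N_0/L_0,[L_0]) \longrightarrow H_*(N/L,[L])$ is a shift equivalence of vector spaces. Proposition \ref{prop:inv_shift}(ii) then yields a canonical decomposition for $H_*(N/L,[L])$, and Proposition \ref{prop:inv_shift}(i) says that $r_*$ restricts to an isomorphism from $CH_*(N_0/L_0)$ onto $CH_*(N/L)$ conjugating $\phi_{N_0/L_0}$ and $\phi_{N/L}$; combined with the previous paragraph this shows that $(CH_*(N/L),\phi_{N/L})$ represents the homological Conley index of $S$. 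The ``moreover'' clause is the same argument run between two arbitrary filtration pairs: a shift equivalence $r : N/L \longrightarrow N'/L'$ induces, exactly as above, a shift equivalence $r_* : H_*(N/L,[L]) \longrightarrow H_*(N'/L',[L'])$ of vector spaces, and since both spaces are already known to admit canonical decompositions, Proposition \ref{prop:inv_shift}(i) provides the asserted isomorphism $CH_*(N/L) \longrightarrow CH_*(N'/L')$, which conjugates $\phi_{N/L}$ and $\phi_{N'/L'}$ because it is the restriction of the equivariant map $r_*$. I expect no genuine obstacle; the only point deserving a word of care is the finite-dimensionality invoked in the first step, everything else being a bookkeeping application of Section \ref{sec:algebraprelim} together with the Franks--Richeson existence and shift-equivalence theorems.
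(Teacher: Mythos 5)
Your proposal is correct and follows essentially the same route as the paper: the paper's proof is precisely the assembly of its observations (i)--(iii), namely fixing a manifold/polyhedral filtration pair with finite-dimensional homology to get the canonical decomposition and the basic shift equivalence onto the algebraic part, and then transporting everything to an arbitrary filtration pair via the Franks--Richeson shift equivalence together with Proposition \ref{prop:inv_shift}. No gaps.
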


From a heuristic point of view one should consider the vector spaces $CH_*(N/L)$ and $CH_*(N'/L')$ as representing the same object (namely the homological Conley index $CH_*(S)$) only expressed in different ``coordinate systems'', with $r_*$ being a ``change of coordinates''. For this reason we will call $CH_*(N/L)$ the homological Conley index (of $S$) as well, in fact omitting the word ``homological'' in the sequel.

\begin{remark} It follows from the proposition that that $(CH_*(N/L),\phi_{N/L})$ is also isomorphic to the Leray reduction of $(f_{N/L})_*$, which is (a homological counterpart) of Mrozek's definition of the discrete Conley index \cite{mro2}. One can check that $CH_*(N/L)$ is also the \v{C}ech homology of the one-point compactification of the unstable manifold of $S$ endowed with the unstable topology, as in Robbin and Salamon \cite{robinsalamon1}. Thus in this setting all definitions of the Conley index are essentially the same.
\end{remark}

So far we have not specified what homology theory we use. Recall that all homology theories are naturally isomorphic over pairs of triangulable spaces (\cite[Section 10, p. 100ff.]{eilenbergsteenrod1}), so the argument leading to Proposition \ref{prop:W2invariant} is valid for every homology theory and shows that $CH_*(N/L)$ is independent of this choice. Since one of our goals is to obtain an intuitive interpretation of the connection homomorphism it would be best to use singular homology throughout. However, to define the map $\partial$ mentioned in the Introduction we need to work with the homology of the (local) unstable manifold of an isolated invariant set $S$. As mentioned before, in general this set need not be a manifold and in fact it can have a very complicated topology. For instance, the (local) unstable manifold of a horseshoe is a Cantor set times an interval. To overcome this difficulty we will take $H_*$ to be \v{C}ech homology (see \cite[Chapter IX, p. 233ff.]{eilenbergsteenrod1}) instead of singular homology. Very roughly speaking, \v{C}ech homology allows chains to have arbitrarily small discontinuities, and so it may be nonzero even when singular homology is zero. On the category of compact pairs, and with coefficients in a field, \v{C}ech homology satisfies all the usual axioms.

Sometimes it is convenient to use homology with $\mathbb{Z}$ coefficients. The definition of the homology Conley index of an isolated invariant set $S$ works the same. Choosing $(N,L)$ to be a pair of manifolds or polyhedra still ensures that $H_*(N/L,[L])$ is finitely generated but, as mentioned above, for an Abelian group this does not ensure that it has a canonical decomposition and Proposition \ref{prop:W2invariant} may fail. For continuous-time dynamics (semiflows) the induced map $\phi_{N/L}$ is the identity and so the polynomial $x-1$ annihilates every element in $H_*(N/L,[L])$. Thus $H_*(N/L,[L])$ indeed has the (trivial) canonical decomposition $H_*(N/L,[L]) = \{0\} \oplus CH_*(N/L)$. For discrete dynamics it is less common to work with coefficients in $\mathbb{Z}$ but we may do so whenever we consider an invariant set $S$ which has a filtration pair $(N,L)$ such that $H_*(N/L,[L])$ has a canonical decomposition. In either case Proposition \ref{prop:W2invariant} still holds, and this is all we will need in the sequel. We mention here that \v{C}ech homology with coefficients in $\mathbb{Z}$ may fail to satisfy the exactness axiom even for compact pairs. This will not affect our arguments but can be fixed by using \v{C}ech cohomology instead (at the cost of losing some intuition).

\section{A dynamical lemma}

For later use we prove here a dynamical lemma which also provides an illustration of why the notion of an algebraic element is useful.

Suppose $T$ is a compact (metric) space and $g : T \longrightarrow T$ is a continuous mapping. For example, $T$ might be a trapping region in phase space and $g$ the restriction of the dynamics to $T$. Consider the nested sequence of compact spaces $g^k(T)$ and their intersection $K := \bigcap_{k \geq 0} g^k(T)$. Clearly $K$ is a compact invariant set; it is in fact the attractor determined by $T$. We are going to prove the following lemma:

\begin{lemma} \label{lem:attrac} Denote by $j : K \subseteq T$ the inclusion. Then the homomorphism induced by $j$ in (\v{C}ech) homology restricts to an isomorphism $j_*| : AH_*(K) \longrightarrow A H_*(T)$.
\end{lemma}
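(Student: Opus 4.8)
The plan is to exploit the continuity property of \v{C}ech homology, which says that for a nested sequence of compact spaces the \v{C}ech homology of the intersection is the inverse limit of the \v{C}ech homologies of the terms. Concretely, writing $T_k := g^k(T)$, we have $K = \bigcap_k T_k$ and hence $H_*(K) = \varprojlim H_*(T_k)$, with the bonding maps in the inverse system induced by the inclusions $T_{k+1} \subseteq T_k$. The point of passing to algebraic parts is that an inverse limit of vector spaces is in general poorly behaved, but on the algebraic parts the bonding maps become isomorphisms, which forces the inverse limit to stabilize.

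First I would set up the dynamical bookkeeping. The map $g$ carries $T_k$ into $T_{k+1}$ (indeed onto, since $g(g^k(T)) = g^{k+1}(T)$), so $g$ induces, via $g_*$, a self-map of the system $\{H_*(T_k)\}$ that shifts degree $k$ up by one, and composing with the inclusion-induced bonding map $H_*(T_{k+1}) \to H_*(T_k)$ recovers the endomorphism $g_*$ on $H_*(T) = H_*(T_0)$ after suitable reindexing. The key observation is that $T_k = g^k(T_0)$, so at the level of homology the inclusion $T_k \hookrightarrow T_0$ followed by $g^k_* : H_*(T_0) \to H_*(T_0)$ is compatible with the system, and on each $H_*(T_k)$ the relevant composite is $(g|)_*^k$. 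Then I would invoke Proposition \ref{prop:alg}(iii): on the algebraic part $AH_*(T_0)$ the endomorphism $g_*$ is an \emph{isomorphism}. Transporting this through the identification $T_k \cong T_0$ (via the homeomorphism $g^k$ when $g$ is injective, or more carefully just using that $g^k_*$ is surjective onto $AH_*(T_k)$ in general), one shows each bonding map in the inverse system, \emph{restricted to algebraic parts}, is an isomorphism.

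Once the bonding maps $AH_*(T_{k+1}) \to AH_*(T_k)$ are all isomorphisms, the inverse limit $\varprojlim AH_*(T_k)$ is canonically isomorphic to $AH_*(T_0) = AH_*(T)$, and the natural projection to the $0$-th term is exactly $j_*$. This gives the desired isomorphism $j_*| : AH_*(K) \to AH_*(T)$, \emph{provided} one checks that taking algebraic parts commutes with the inverse limit, i.e. $AH_*(K) = A\big(\varprojlim H_*(T_k)\big) = \varprojlim AH_*(T_k)$. The inclusion $\varprojlim AH_*(T_k) \subseteq A(\varprojlim H_*(T_k))$ is easy since a coherent family of algebraic elements annihilated (say) by a common normalized polynomial — or, if the polynomials vary, here they can be taken uniform because the bonding maps are algebraic-part isomorphisms — gives an algebraic element of the limit. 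The reverse inclusion, that an algebraic element of the inverse limit projects to algebraic elements, is immediate since the projections are equivariant.

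The main obstacle I anticipate is the non-injectivity of $g$: the clean picture ``$T_k$ is homeomorphic to $T_0$'' fails, so one cannot simply transport the isomorphism $g_*|_{AH_*}$ along homeomorphisms of the spaces. Instead I would argue purely algebraically with the inverse system $\{H_*(T_k)\}$ and its endomorphism, using that each space map $T_{k+1} \to T_k$ (inclusion) and $T_k \to T_{k+1}$ (induced by $g$) together with the self-map $g_*$ realize a shift-equivalence-like relation at each finite stage, so that Propositions \ref{prop:inv_shift} and \ref{prop:exact}-style reasoning applies to conclude the bonding maps are isomorphisms on algebraic parts and the limit stabilizes. A secondary technical point is ensuring $H_*(K)$ really is the inverse limit — this is the continuity axiom of \v{C}ech homology for compact pairs, which the excerpt has already committed to using, so I may invoke it directly.
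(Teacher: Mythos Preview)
Your proposal is correct and follows essentially the same route as the paper: both use continuity of \v{C}ech homology to write $H_*(K) = \varprojlim H_*(g^k(T))$, observe that the inclusion $g^{k+1}(T) \hookrightarrow g^k(T)$ and the map $g : g^k(T) \to g^{k+1}(T)$ are shift inverses so that Proposition~\ref{prop:inv_shift}(i) makes each bonding map an isomorphism on algebraic parts, and then argue that an algebraic element of the limit is exactly a coherent sequence of algebraic elements (with a common annihilating polynomial, propagated inductively). The paper carries out the last step by an explicit injectivity/surjectivity argument rather than phrasing it as ``algebraic parts commute with the inverse limit,'' but the content is the same.
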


There exist results in the literature relating the cohomology of an attractor for a homeomorphism and its basin of attraction (\cite{gobbino1}, \cite{pacoyo1}). The main difference here is that we cannot assume the dynamics to be invertible, since we will eventually apply this lemma to the dynamics in a quotient $N/L$.

To prove the lemma we will use the following description of the homology of $K$, which follows from the continuity property of \v{C}ech homology (see for example \cite[Theorem 3.1, p. 261]{eilenbergsteenrod1}). By construction the sets $g^k(T)$ form a decreasing sequence of compact whose intersection is $K$. Then the homology of $K$ can be identified with the inverse limit of the inverse sequence \[\xymatrix{H_*(T) & H_*(g(T)) \ar[l] & H_*(g^2(T)) \ar[l] & \ldots \ar[l]}\] where each arrow is induced by an inclusion. This inverse limit consists by definition of all sequences $(z_k)$ where (i) each $z_k \in H_*(g^k(T))$ and (ii) the $z_k$ are coherent in the sense that each $z_{k+1}$ is sent to $z_k$ by the inclusion induced map $H_*(g^{k+1}(T)) \longrightarrow H_*(g^k(T))$. With this identification, the inclusion induced map $H_*(K) \longrightarrow H_*(g^k(T))$ is just given by $(z_k) \longmapsto z_k$.

\begin{proof}[Proof of Lemma \ref{lem:attrac}] Consider each $g^k(T)$ endowed with the dynamics given by $g$; i.e. with the map $g|_{g^k(T)} : g^k(T) \longrightarrow g^k(T)$. Let $j_k$ be the inclusion of $g^{k+1}(T)$ in $g^k(T)$ and set $r_k : g^k(T) \longrightarrow g^{k+1}(T)$ to be the map $r_k(x) := g(x)$. Clearly $j_k$ and $r_k$ are shift inverses to each other, and so are the homomorphisms they induce in homology. Thus by Proposition \ref{prop:inv_shift}.(i) each homomorphism $(j_k)_* : AH_*(g^{k+1}(T)) \longrightarrow AH_*(g^k(T))$ is an isomorphism.

We now show that $j_*|$ is injective. Let $z = (z_k) \in H_*(K)$ be algebraic and assume that $j_*(z) = 0$; i.e. $z_0 = 0$. Since $z$ is algebraic, so is its image $z_k$ under each inclusion induced map $H_*(K) \longrightarrow H_*(g^k(T))$ because these are all equivariant. Thus $z_1$ is an element in $AH_*(g(T))$ whose image under the inclusion induced map $AH_*(g(T)) \longrightarrow AH_*(T)$ is $z_0 = 0$, so $z_1 = 0$ by the paragraph above. Starting now with $z_1$ and repeating the same argument shows that $z_2 = 0$, and inductively all $z_k = 0$ so $z = 0$.

To prove that $j_*|$ is surjective pick $z \in H_*(T)$ algebraic; say that it is annihilated by the normalized polynomial $P$. Set $z_0 := z$. By the first paragraph of the proof there is an algebraic $z_1 \in H_*(g(T))$ such that $(j_1)_*(z_1) = z_0$. We observe that in fact $z_1$ is annihilated by $P$. Indeed, $P \cdot z_1 \in H_*(g(T))$ is still algebraic (Proposition \ref{prop:alg}.(ii)) and is carried by $(j_1)_*$ to $P \cdot z_0 = 0$, so by the injectivity of $(j_1)_*$ on algebraic elements we must have $P \cdot z_1 = 0$. Starting now with $z_1$ we reason in the same way to construct $z_2 \in AH_*(g^2(T))$ which goes to $z_1$ under the inclusion $(j_2)_*$ and is annihilated by $P$. Inductively, this defines a sequence $z = (z_k) \in H_*(K)$ which goes to $z_0 = z$ via $j_*$. Notice that $P \cdot z = (P \cdot z_k) = 0$, so $z$ is an algebraic element in $H_*(K)$. Thus $j_*|$ is indeed surjective.
\end{proof}

\section{Isolated invariant sets as ``emitters''} \label{sec:emitters}

Let $S$ be an isolated invariant set and suppose $(N,L)$ is a filtration pair for $S$. The local unstable manifold of $N$ is the set $W^u_{\rm loc}(N)$ (or just $W^u_{\rm loc}$) of points $x \in N$ having a negative semiorbit $(x_n)_{n \leq 0}$ entirely contained in $N$. The local unstable manifold is closed in $N$ and hence compact. We denote by $F := W^u_{\rm loc}(N) \cap L$ and call $F$ a domain of the unstable manifold of $S$, although strictly speaking it need not be entirely contained in $W^u(S)$ (see the end of this section). It is ``a domain'' in the sense that its iterates cover the whole unstable manifold of $S$, and it is not ``a fundamental domain'' since it may have large overlaps with its iterates.

Every element $z$ in the Conley index $CH_*(N/L)$ can be represented, essentially by definition, by a relative chain in $N$ whose boundary lies in $L$. The basic result in this section is Proposition \ref{prop:algebraic_rep} which shows that in fact $z$ can be represented by a relative chain in $W^u_{\rm loc}$; since its boundary lies in $L$, it actually lies in $W^u_{\rm loc} \cap L = F$. In this manner we will define a linear map \[\partial_F : CH_q(N/L) \longrightarrow H_{q-1}(F)\] where, roughly, $\partial_F (z)$ is the boundary of a representative of $z$ contained in $W^u_{\rm loc}$. As a physical analogy, every element $z$ in $CH_q(N/L)$ is represented by a ``wave'' (a relative cycle) in the local unstable manifold and its ``wavefront'' (the boundary of the cycle) is precisely $\partial_F (z)$.

\begin{example} Consider the map $f(x,y,z) = (x/2,y/2,2z)$ in $\mathbb{R}^3$. This generates a dynamical system with the phase portrait of Figure \ref{fig:receiver1}.(a). The origin $S$ is a hyperbolic fixed point with the plane $z = 0$ as its stable manifold $W^s$ and the $z$--axis as its unstable manifold $W^u$. Choose the filtration pair $(N,L)$ shown in Figure \ref{fig:receiver1}.(b): $N$ is a cube and $L$ consists of two thick slabs at the top and bottom of the cube. The local unstable manifold is just the portion of the $z$--axis contained in the cube and $F$ consists of two short segments at the ends of $W^u_{\rm loc}$.

Clearly $f_{N/L}$ is homotopic to the identity and so $CH_*(N/L)$ is the homology $H_*(N/L,[L])$ while the index map $\phi_{N/L}$ is the identity. This homology is nontrivial only in degree one, and in this degree it is generated by the homology class $z$ of the relative cycle $g$ shown as a jagged line in the figure. As claimed above $z$ is also represented by a relative chain that is entirely contained in $W^u_{\rm loc}$; namely the whole straight line segment that constitutes $W^u_{\rm loc}$. In this case, then, $\partial_F(z)$ is the $0$-chain in $F$ that consists of the two endpoints of the local unstable manifold. 
\end{example}

\begin{figure}[h!]
\null\hfill
\subfigure[]{
\begin{pspicture}(0,-1)(5,6)
	\rput[bl](0,0){\scalebox{0.8}{\includegraphics{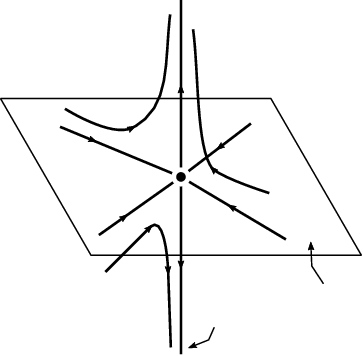}}}
	\rput[bl](2.9,0.4){$W^u$}
	\rput[tl](4.4,0.9){$W^s$}
        \rput[bl](2.1,2.7){$S$}
\end{pspicture}}
\hfill
\subfigure[]{
\begin{pspicture}(0,0)(5,7)
	\rput[bl](0,0){\scalebox{0.8}{\includegraphics{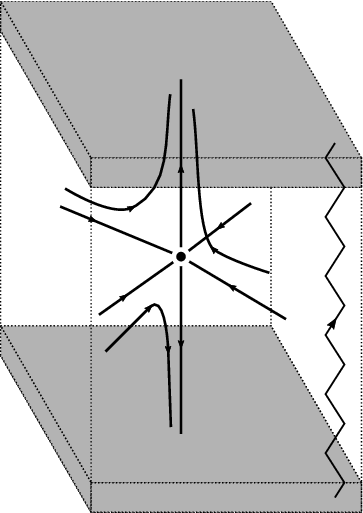}}}
	\rput(4.4,2){$g$}
         \rput[bl](2.1,3.7){$S$}
\end{pspicture}}
\hfill\null
\caption{The set $S$ and a filtration pair $(N,L)$ \label{fig:receiver1}}
\end{figure}

Now we formalize this idea. Instead of working with (\v{C}ech) cycles in $W^u_{\rm loc}$ relative to $F$ it will be easier to work in the quotient $W^u_{\rm loc}/F$. This is the result of collapsing $W^u_{\rm loc} \cap L$ to a single point, so it is the same as the image of $W^u_{\rm loc}$ under the canonical projection $N \longrightarrow N/L$; in particular, $W^u_{\rm loc}/F$ is a compact subset of $N/L$ which is invariant under $f_{N/L}$. It might happen that $F = \emptyset$; then we declare that $W^u_{\rm loc}/F$ is the image of $W^u_{\rm loc}$ under the canonical projection together with the isolated point $[L]$ (this is the same convention used for the quotient $N/L$ when $L = \emptyset$).

\begin{proposition} \label{prop:algebraic_rep} Let $j$ be the inclusion of $W^u_{\rm loc}/F$ into $N/L$. Then $j_*$ restricts to an isomorphism between the algebraic part of $H_q(W^u_{\rm loc}/F,[F])$ and the Conley index $CH_q(N/L)$.
\end{proposition}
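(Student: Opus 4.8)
The plan is to reduce the statement to the dynamical Lemma~\ref{lem:attrac}. The bridge is the observation that $W^u_{\rm loc}/F$, viewed as a compact subset of $N/L$ (endowed with the restriction of $f_{N/L}$), is exactly the attractor of the localized system $(N/L,f_{N/L})$; in symbols,
\[
W^u_{\rm loc}/F \;=\; K \;:=\; \bigcap_{k \geq 0} f_{N/L}^k(N/L).
\]
Granting this, one applies Lemma~\ref{lem:attrac} with $T = N/L$ and $g = f_{N/L}$. That lemma and its proof use only the continuity property of \v{C}ech homology and the formal shift-equivalence structure of the nested sequence $g^k(T)$, and both remain valid verbatim for the compact \emph{pair} $(N/L,[L])$ because $f_{N/L}$ fixes the basepoint $[L]$. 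Hence the inclusion $j$ induces an isomorphism between the algebraic part of $H_q(K,[L])$ and the algebraic part of $H_q(N/L,[L])$, the latter being $CH_q(N/L)$ by definition; since $(K,[L]) = (W^u_{\rm loc}/F,[F])$ by the displayed identity, this is exactly the assertion of the proposition.

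So the work lies in proving the displayed equality. First I would describe $f_{N/L}^k(N/L)$ explicitly: unwinding the definition of $f_{N/L}$, a class $[y] \neq [L]$ belongs to $f_{N/L}^k(N/L)$ if and only if its unique representative $y \in N \setminus L$ admits a backward chain $y = y_0, y_{-1}, \dots, y_{-k}$ with $f(y_{-i}) = y_{-i+1}$ and every $y_{-i} \in N \setminus L$; and $[L]$ lies in every $f_{N/L}^k(N/L)$ since it is a fixed point. Intersecting over $k$, a class $[y] \neq [L]$ lies in $K$ iff $y \in N \setminus L$ has backward chains in $N \setminus L$ of every finite length. Two things then remain. \textbf{(a)} Such a $y$ carries a full backward semiorbit contained in $N$, so $y \in W^u_{\rm loc}$: this is the usual compactness argument, the set of backward chains of length $k$ ending at $y$ being a nonempty compact subset of $N^k$ and the inverse limit of an inverse sequence of nonempty compacta with continuous bonding maps being nonempty. \textbf{(b)} Conversely, if $y \in W^u_{\rm loc}$ with $y \in N \setminus L$, then every backward semiorbit of $y$ in $N$ automatically avoids $L$: by property (iii) of a filtration pair, $f(L) \cap \overline{N \setminus L} = \emptyset$, so a point $y_{-j}$ of the semiorbit with $j \geq 1$ lying in $L$ would force $y_{-j+1} = f(y_{-j}) \notin N \setminus L$, hence $y_{-j+1} \in L$ as $y_{-j+1} \in N$, and iterating up to index $0$ would give $y_0 = y \in L$, a contradiction. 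Combining (a) and (b) yields $K \setminus \{[L]\} = \{[y] : y \in W^u_{\rm loc} \setminus F\}$, which is the required identity; and the degenerate case $F = \emptyset$ is consistent with the convention adjoining an isolated point $[L]$, since $[L] \in K$ always.

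I expect the identification $W^u_{\rm loc}/F = K$ to be the only real obstacle: it is not deep, but it requires care with the immediate exit set $L$ (where filtration property (iii) enters in an essential, if small, way) and with the compactness argument of step (a), together with a check of the $F = \emptyset$ convention. Once this is settled, Lemma~\ref{lem:attrac} supplies the conclusion directly, the last remaining point being the routine verification that that lemma carries over to the pointed/relative category — which is needed because $CH_q(N/L)$ is defined through the reduced homology $H_q(N/L,[L])$.
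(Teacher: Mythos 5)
Your proposal is correct and takes essentially the same route as the paper: the paper's proof also consists of identifying $W^u_{\rm loc}/F$ with the attractor $\bigcap_{k\geq 0} f_{N/L}^k(N/L)$ of the localized system (via the same compactness/diagonal argument for the backward semiorbits, with the reverse inclusion handled through $W^u_{\rm loc}\subseteq f(W^u_{\rm loc})$ rather than your step (b), which amounts to the same use of filtration property (iii)) and then invoking Lemma~\ref{lem:attrac} with $T=N/L$, $K=W^u_{\rm loc}/F$.
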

\begin{proof} The dynamical key of the argument is that $W^u_{\rm loc}/F$ attracts all of $N/L$ in the sense that the decreasing sequence of compacta \[N/L \supseteq f_{N/L}(N/L) \supseteq \ldots\] has precisely $W^u_{\rm loc}/F$ as its intersection. We first prove this. The inclusion $W^u_{\rm loc}/F \subseteq \bigcap_{k \geq 0} f_{N/L}^k(N/L)$ holds because $W^u_{\rm loc} \subseteq f(W^u_{\rm loc})$. For the converse, suppose $[x] \in f_{N/L}^k(N/L)$ for every $k \geq 0$. If $[x] = [L]$ there is nothing to prove since this point belongs to $W^u_{\rm loc}/F$ by definition. Assume then that $[x] \neq [L]$. Then for each $k$ there exists $x_k \in N \setminus L$ such that $x = f^k(x_k)$, so $x$ has the backwards orbit segment $x_k,f(x_k),\ldots,f^k(x_k)$ entirely contained in $N$. Since $k$ is arbitrarily large, a diagonal argument shows that $x$ has a full negative semiorbit contained in $N$ (see the proof of \cite[Proposition 2.2, p. 3306]{franksricheson1}). Thus indeed $x \in W^u_{\rm loc}$ and so $[x] \in W^u_{\rm loc}/F$ as was to be shown. The proposition then follows from Lemma \ref{lem:attrac} applied to $T = N/L$ and $K = W^u_{\rm loc}/F$ and the equality (by definition) $A H_*(N/L,[L]) = CH_*(N/L)$.
\end{proof}

The proposition above shows that there is a well defined homomorphism ${j_*|}^{-1} : CH_q(N/L) \longrightarrow AH_q(W^u_{\rm loc}/F,[F]) \subseteq H_q(W^u_{\rm loc}/F,[F])$. Also, the canonical projection $\pi : (W^u_{\rm loc},F) \longrightarrow (W^u_{\rm loc}/F,[F])$ induces an isomorphism $\pi_* : H_*(W^u_{\rm loc},F) \cong H_*(W^u_{\rm loc}/F,[F])$ owing to the strong excision property of \v{C}ech homology on compact pairs (\cite[Theorem 5.4, p. 266]{eilenbergsteenrod1}). Notice that this is true even when $F = \emptyset$ because of the \emph{ad hoc} definition for that case. Then we define the homomorphism $\partial_F : CH_q(N/L) \longrightarrow H_{q-1}(F)$ essentially as explained before; namely as the composition \[\partial_F : \xymatrix{CH_q(N/L) \ar[r]^-{{j_*|}^{-1}} & H_q(W^u_{\rm loc}/F,[F]) \cong H_q(W^u_{\rm loc},F) \ar[r]^-{\partial} & H_{q-1}(F)},\] where $\partial$ is the boundary map of the homology long sequence for the pair $(W^u_{\rm loc},F)$.

Later on we will need the following lemma:

\begin{lemma} \label{lem:include_bdry} Let $(N,L)$ and $(N',L')$ be two filtration pairs for the same isolated invariant set $S$. Assume that $(N,L) \subseteq (N',L')$. Denote by $i : F \longrightarrow F'$ the inclusion and by $r : N/L \longrightarrow N'/L'$ the natural shift equivalence given by inclusion followed by projection (see Example \ref{ex:inclusion}). Then the following diagram commutes: \[\xymatrix{CH_k(N/L) 
 \ar[d]_{r_*} \ar[r]^-{\partial_F} & H_{k-1}(F) \ar[d]_{i_*} \\ CH_k(N'/L') \ar[r]^-{\partial_{F'}} & H_{k-1}(F')}\]
\end{lemma}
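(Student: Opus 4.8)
The plan is to unwind the definition of $\partial_F$ on both rows and check that the resulting square of maps commutes level by level. Recall that $\partial_F$ is the composition of three maps: the inverse ${j_*|}^{-1}$ of the isomorphism $j_*| : AH_q(W^u_{\rm loc}/F,[F]) \to CH_q(N/L)$ induced by inclusion, the excision isomorphism $\pi_* : H_q(W^u_{\rm loc},F) \cong H_q(W^u_{\rm loc}/F,[F])$, and the connecting boundary map $\partial$ of the pair $(W^u_{\rm loc},F)$. So I would rewrite the square as a stack of three squares, one for each of these pieces, and verify commutativity of each. The bottom square (the boundary maps $\partial$ of the long exact sequences of $(W^u_{\rm loc},F)$ and $(W^u_{\rm loc}{}',F')$) commutes by naturality of the long exact sequence of a pair with respect to the map of pairs $(W^u_{\rm loc},F) \hookrightarrow (W^u_{\rm loc}{}',F')$; note that $W^u_{\rm loc}(N) \subseteq W^u_{\rm loc}(N')$ since a negative semiorbit in $N$ is a negative semiorbit in $N'$, and $F = W^u_{\rm loc}(N) \cap L \subseteq W^u_{\rm loc}(N') \cap L' = F'$, so this map of pairs is well defined. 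The middle square (the excision isomorphisms $\pi_*$) commutes because the canonical projections $(W^u_{\rm loc},F) \to (W^u_{\rm loc}/F,[F])$ and the primed version are compatible with the inclusion-induced maps on both sides — this is just functoriality of the quotient construction.

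The remaining, and genuinely substantive, square is the top one:
\[\xymatrix{CH_q(N/L) \ar[d]_{r_*} \ar[r]^-{{j_*|}^{-1}} & AH_q(W^u_{\rm loc}/F,[F]) \ar[d] \\ CH_q(N'/L') \ar[r]^-{{j'_*|}^{-1}} & AH_q(W^u_{\rm loc}{}'/F',[F']) }\]
where the right-hand vertical is the map induced by the inclusion $W^u_{\rm loc}/F \hookrightarrow W^u_{\rm loc}{}'/F'$ (restricted to algebraic parts). Since all four maps here are isomorphisms onto their targets (the horizontal ones by Proposition \ref{prop:algebraic_rep}), it is equivalent — and easier — to check commutativity of the square obtained by inverting the horizontal arrows, i.e. the square with $j_*|$ and $j'_*|$ going the other way. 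That square reads: inclusion $W^u_{\rm loc}/F \hookrightarrow N/L$ followed by $r_*$ (inclusion-then-projection $N/L \hookrightarrow N'/L'$), versus inclusion $W^u_{\rm loc}/F \hookrightarrow W^u_{\rm loc}{}'/F'$ followed by inclusion $W^u_{\rm loc}{}'/F' \hookrightarrow N'/L'$. Both composites are, on the nose, the map $W^u_{\rm loc}/F \to N'/L'$ given by $[x] \mapsto [x]$ (sending $[F]$ and $[L]$ to $[L']$), so the two composites of continuous maps of pairs are literally equal, hence induce equal maps in \v{C}ech homology. The only subtlety is bookkeeping with the basepoints and the degenerate cases $F=\emptyset$ or $L=\emptyset$, where $W^u_{\rm loc}/F$ and $N/L$ are defined with an adjoined isolated point; one checks that the inclusion maps still behave as the identity-on-points map and the diagram of spaces still commutes by the same \emph{ad hoc} conventions used throughout Section \ref{sec:conley}.

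I expect the main obstacle to be purely organizational rather than conceptual: one must be careful that the ``inclusion'' $W^u_{\rm loc}/F \hookrightarrow N/L$ is genuinely induced by a map of pairs (it is, because $F = W^u_{\rm loc} \cap L$ maps into $[L]$), that $r$ is the \emph{specific} shift equivalence of Example \ref{ex:inclusion} and not merely shift-equivalent to it (the naturality only holds for this concrete representative, since a different shift inverse $s$ would only give the identity up to a power of the index map), and that passing to algebraic parts is legitimate — which it is, since every map in sight is equivariant and by Proposition \ref{prop:inv_shift}(i) and Proposition \ref{prop:W2invariant} the relevant restrictions to algebraic parts are isomorphisms conjugating the index maps. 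Once these identifications are in place the commutativity is immediate from functoriality of \v{C}ech homology applied to a commuting diagram of maps of compact pairs.
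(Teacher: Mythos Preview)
Your proposal is correct and follows essentially the same approach as the paper: both set up the same three-square diagram (the $j_*$ inclusions, the projection isomorphisms $\pi_*$, and the connecting homomorphisms $\partial$), verify that each square commutes at the level of spaces or by naturality, and then pass to algebraic parts. The only cosmetic difference is that the paper chases an element $z$ through the diagram and invokes uniqueness of the algebraic preimage under $j_*$, whereas you invert the horizontal isomorphisms abstractly; these are equivalent arguments.
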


This lemma is the first of several results in the paper which claim that certain diagrams involving $r_*$ are commutative. They can all be interpreted by recalling from Proposition \ref{prop:W2invariant} that $r_*$ is an isomorphism which can be thought of as a ``change of coordinates'' for the Conley index. With this idea in mind, the lemma above just states that the operator $\partial$ is independent of the ``coordinate system'' $(N,L)$ in which it is expressed.

\begin{proof} Denote by $W^u_{\rm loc}(N)$ and $W^u_{\rm loc}(N')$ the local unstable manifolds of both filtration pairs; notice that $W^u_{\rm loc}(N) \subseteq W^u_{\rm loc}(N')$. Similarly, let $F = W^u_{\rm loc}(N) \cap L$ and $F' = W^u_{\rm loc}(N') \cap L'$ be the corresponding domains, which evidently satisfy $F \subseteq F'$. Consider the following diagram: \[\xymatrix{ CH_k(N/L) \ar[d]_{r_*} & H_k(W^u_{\rm loc}(N)/F,[F]) \ar[l]_-{j_*} \ar[d]_{r_*} & H_k(W^u_{\rm loc}(N),F) \ar[l]_-{\pi_*}^-{\cong} \ar[r]^-{\partial} \ar[d] & H_{k-1}(F) \ar[d]^{i_*} \\ CH_k(N'/L') & H_k(W^u_{\rm loc}(N')/F',[F']) \ar[l]_-{j'_*} & H_k(W^u_{\rm loc}(N'),F') \ar[l]_-{\pi'_*}^-{\cong} \ar[r]^-{\partial'} & H_{k-1}(F')} \] Here $\pi$ and $\pi'$ are the canonical projections and the unlabeled vertical arrow is induced by the obvious inclusion $(W^u_{\rm  loc}(N),F) \subseteq (W^u_{\rm loc}(N'),F')$. The middle square commutes since it already commutes at the level of topological spaces due to the definition of $r$. The rightmost square commutes by the naturality of the long sequence of a compact pair in \v{C}ech homology.

Pick $z \in CH_k(N/L)$. There exists a unique algebraic element $w \in H_k(W^u_{\rm loc}(N)/F,[F])$ such that $j_*(w) = z$ and we have $\partial_F (z) = \partial \circ \pi_*^{-1}(w)$ by definition. Similarly, and considering $r_*(z) \in CH_*(N'/L')$, there exists a unique algebraic $w' \in H_*(W^u_{\rm loc}(N')/F',[F'])$ such that $j'_*(w') = r_*(z)$; again $\partial_{F'}(r_*(z)) = \partial' \circ (\pi')^{-1}_*(w')$ by definition.

Now, the leftmost square commutes, so $r_*(z) = j'_* \circ r_*(w)$. Since $w$ is algebraic and $r$ is equivariant, $r_*(w)$ is also algebraic and therefore by uniqueness $w' = r_*(w)$. The commutativity of the other two squares then implies that $i_*(\partial_F (z)) = \partial_{F'}(r_*(z))$.
\end{proof}

We mentioned in the Introduction that $\partial$ captures the intuitive idea of $S$ emitting wavefronts along its unstable manifold $W^u(S)$. We conclude with a remark about this. It is not always the case that $F \subseteq W^u(S)$, since $L$ may contain an invariant set (this will actually be the case when we study attractor-repeller decompositions in Section \ref{sec:ARdec}). However, one can safely visualize the image of $\partial_F$ as lying in the unstable manifold of $S$. Notice first that the portion $W := W^u_{\rm loc} \cap \overline{N \setminus L}$ of the local unstable manifold is indeed contained in $W^u(S)$. Denote by ${\rm fr}\ L$ the topological frontier of $L$ in $N$ and by ${\rm fr}\ W := W \cap {\rm fr}\ L$, a sort of ``boundary'' of $W$. One then has $W^u_{\rm loc}/F = W/{\rm fr}\ W$ and so in the definition of $\partial_F$ we see that its image is actually contained in $H_{q-1}({\rm fr}\ W)$, where now ${\rm fr}\ W \subseteq W^u(S)$. This accords better with the idea of a wavefront emitted by $S$ and supported in ${\rm fr}\ W$.

\section{Isolated invariant sets as ``receivers''} \label{sec:receivers}

We have seen in the previous section how an invariant set $S$ can ``emit'' elements of its Conley index. Now we explain how $S$ can ``receive'' certain relative cycles to produce elements of its Conley index. This is more involved and will be explained in several stages. In Subsection \ref{subsec:motiva} we make use of the example from the previous section to illustrate the essential ideas. We show how to define some sort of algebraic $\omega$-limit of a chain that lies nearby $S$ and then translate this into a formal definition of a map $\omega_{N/L}$ in Subsection \ref{subsec:omegaNL}. In Subsection \ref{subsec:OmegaNL} we explain how to extend this definition to chains that lie far away from the invariant set, leading to a map $\Omega_{N/L}$ which extends $\omega_{N/L}$.

\subsection{A heuristic description} \label{subsec:motiva} 

Let $S$ be an isolated invariant set with a filtration pair $(N,L)$. The local stable manifold of $N$ is defined as the set $W^s_{\rm loc}(N)$ of points in $N$ whose positive semiorbit is entirely contained in $N \setminus L$; that is, $W^s_{\rm loc}(N) = \{x \in N : f^n(x) \in N \backslash L \text{ for all } n \geq 0\}$. The notation $W^s_{\rm loc}(N)$ is incomplete since it does not record the set $L$ although it plays a role in the definition; we will often abbreviate it by $W^s_{\rm loc}$ anyway. The local stable manifold is compact, positively invariant, and disjoint from $L$. In particular it is homeomorphic to its projection onto $N/L$ and we will use the same notation to denote both sets.

\begin{example} \label{ex:omega} We use again the same dynamics from Figure \ref{fig:receiver1}. The local stable manifold is the portion of the $z = 0$ plane contained in $N$. Recall that $CH_1(N/L)$ is generated by $[g]$.

(A) Consider the two $1$--chains $c_1$ and $c_2$ depicted as jagged lines in panels (a) and (b) of Figure \ref{fig:receiver2}. They can be arbitrary as long as their boundaries $\partial c_1$ and $\partial c_2$ do not intersect the local stable manifold $W^s_{\rm loc}$. The invariant set $S$ and the local stable manifold $W^s_{\rm loc}$ are shown for reference purposes. Regard $c_1$ and $c_2$ as chains in $N/L$ (although we still draw them in $N$ for simplicity) and imagine iterating them forwards with the dynamics $f_{N/L}$ in $N/L$. The two panels of the figure depict four ``snapshots'' of this forward evolution of the chains, showing how they get stretched onto $W^u_{\rm loc}$. We want to provide reasonable definitions for their ``$\omega$-limits'' as elements of $CH_1(N/L)$. Consider first the chain $c_1$. Although this might not be clear in the drawing, $c_1$ is taken to lie entirely below the local stable manifold, and so its iterates converge towards $[L]$. In fact, since (the support of) $c_1$ is a compact set disjoint from the local stable manifold, at some finite time $c_1$ will be entirely absorbed by $[L]$. Therefore it seems reasonable to declare that its limit is the zero element of the Conley index $CH_1(N/L)$. The chain $c_2$ behaves differently, however. This time the chain (and therefore also its iterates) intersects the local stable manifold (the hollow points in the drawing represent these intersections). As the chain evolves with the dynamics it gets stretched in the unstable direction, and its endpoints eventually fall into $[L]$. At that stage the chain will represent an element in $H_1(N/L,[L])$; in fact, it will represent $\pm [g]$ (depending on the chosen orientations). Observe that further iteration of $c_2$ will only ``stretch'' the chain even more, but its endpoints will remain in $[L]$ and it will still represent the same element. Thus it makes sense to say that the limit of $c_2$ is the element $[g] \in CH_1(N/L)$.

\begin{figure}[h!]
\null\hfill
\subfigure[Chain $c_1$]{
\begin{pspicture}(0,0)(5,7)
	\rput[bl](0,0){\scalebox{0.8}{\includegraphics{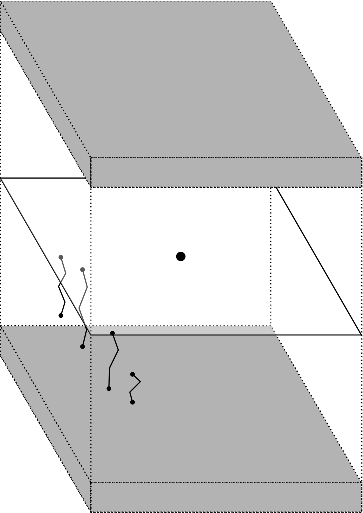}}}
	\rput(0.5,3){$c_1$}
        \rput(4.5,2.1){$W^s_{\rm loc}$}
        \rput[bl](2.7,3.5){$S$}
\end{pspicture}}
\hfill
\subfigure[Chain $c_2$]{
\begin{pspicture}(0,0)(5,7)
	\rput[bl](0,0){\scalebox{0.8}{\includegraphics{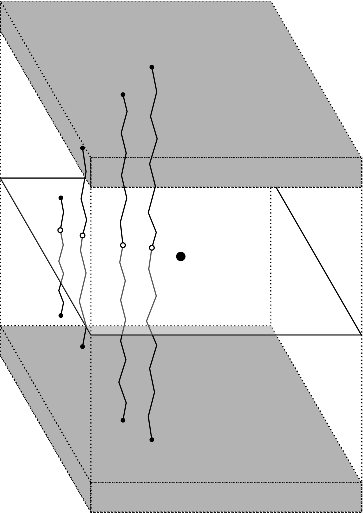}}}
	\rput(0.5,3){$c_2$}
  \rput(4.5,2.1){$W^s_{\rm loc}$}
        \rput[bl](2.7,3.5){$S$}
\end{pspicture}}
\hfill\null
\caption{ \label{fig:receiver2}}
\end{figure}

(B) We now make an observation. Consider again the chain $c_2$. Imagine ``trimming it down'' and leaving just a short portion of the chain around the hollow point where it intersects the local stable manifold. Let us denote by $c'_2$ this new chain. Then when the dynamics is applied to $c'_2$ it should be clear that one may need to iterate the chain for a longer time before its endpoints lie in $[L]$, but thereafter its behaviour is essentially the same as that of the iterates of $c_2$. In particular, their limits should be the same. In other words, one expects that the limit of a chain depends only on the portion of the chain in an arbitrarily small neighbourhood (an ``infinitesimal neighbourhood'') of its intersection with the local stable manifold. This suggests that it should be possible to formalize this construction as a map $\omega_{N/L} : H_*(N/L,N/L \backslash W^s_{\rm loc}) \longrightarrow CH_*(N/L,[L])$.

(C) Let us consider the situation for discrete dynamics. This introduces a new element in the picture. Imagine that the dynamics is given by $f(x,y,z) = (x/2,y/2,-2z)$. The phase portrait of this is essentially the same as before, and in particular $(N,L)$ is still a filtration pair for the origin $S$. 
The Conley index $CH_1(N/L)$ is still generated by $[g]$ but now the index map $\phi_{N/L}$ is $-{\rm Id}$. If we try to replicate the limiting process described above for the chain $c_2$, it is clear that $c_2$ gets stretched as before, but after its endpoints land in $L$ the element $H_1(N/L,[L])$ represented by the chain oscillates between $[g]$ and $-[g]$ and so does not ``converge''.

The way out of this is to analyze slightly more carefully the idea of convergence of a chain $c$ that we wish to capture. In a very crude sense the dynamics on $N/L$ has two ``components''. There is a ``horizontal'' component of the motion in the stable direction (parallel to the $xy$ plane) which moves the chain towards the local unstable manifold (the $z$--axis, in this case). This is the motion whose limit we want to find. There is also a ``vertical'' component of the motion, which is the one that flips $g$ each time the dynamics is applied. We would like to disregard this component of the dynamics, and in order to do so we notice that the local unstable manifold itself is also flipped by the dynamics each time $f$ is applied. This implies that an observer ``anchored'' to the local unstable manifold would still notice that the chain $c_2$ approaches her under the dynamics but would be unaware of the vertical flipping motion. For such an observer, then, the sequence of iterates of the chain would indeed be convergent. This suggests that the limit of a chain should be defined from the perspective of an observer comoving with the local unstable manifold.
\end{example}

We summarize the discussion of the example as follows. Let $c$ be a singular chain in $N/L$ whose boundary $\partial c$ does not meet $W^s_{\rm loc}$. For big enough $k$ we have that the boundary of the chain $f^k_{N/L} \circ c$ collapses onto $[L]$, and so it represents a homology class in $H_*(N/L,[L])$. Recalling that the latter has the canonical decomposition ${\rm gker}\ (f_{N/L})_* \oplus CH_*(N/L)$, by making $k$ even larger we may achieve that $f_{N/L}^k \circ c$ represents an element in $CH_*(N/L)$. We finally define the ``algebraic $\omega$-limit'' of the chain $c$ to be $\phi_{N/L}^{-k}([f_{N/L}^k \circ c])$. The effect of $\phi_{N/L}^{-k}$ is precisely to eliminate the vertical component of the motion as described in (C). Notice that the structure of this expression is essentially the same as that of the canonical projection $\omega$ in p. \pageref{pg:can_proj}. Also, when the dynamics is given by a semiflow we have $\phi_{N/L} = {\rm Id}$ and so this last step is irrelevant.

\subsection{The map $\omega_{N/L}$} \label{subsec:omegaNL} Fix any filtration pair $(N,L)$. We will now define a homomorphism \[\omega_{N/L} : H_*(N/L,N/L \backslash W^s_{\rm loc}) \longrightarrow C H_*(N/L)\] which formalizes the preceding discussion (and generalizes it to \v{C}ech homology).

Regard the local stable manifold $W^s_{\rm loc}$ as a subset of $N/L$. It is straightforward to check that $N/L \setminus W^s_{\rm loc}$ is positively invariant under $f_{N/L}$, so $f_{N/L}$ induces a semidynamical system on the pair $(N/L,N/L \setminus W^s_{\rm loc})$. Let $i : (N/L,[L]) \longrightarrow (N/L,N/L \backslash W^s_{\rm loc})$ be the inclusion. This induces a homomorphism $i_* : H_*(N/L,[L]) \longrightarrow H_*(N/L,N/L \setminus W^s_{\rm loc})$. Both homology groups are equipped with the endomorphism $(f_{N/L})_*$ and $i_*$ is equivariant with respect to these. Recall that $H_*(N/L,[L])$ has the canonical decomposition ${\rm gker}\ (f_{N/L})_* \oplus CH_*(N/L)$. 

\begin{proposition} \label{prop:can_desc_omega} The map $i_*$ restricts to an isomorphism between $CH_*(N/L)$ and its image in $H_*(N/L,N/L \setminus W^s_{\rm loc})$. Moreover, the latter admits the canonical decomposition \[H_*(N/L,N/L \setminus W^s_{\rm loc}) = {\rm gker}\ (f_{N/L})_* \oplus i_* CH_*(N/L).\]
\end{proposition}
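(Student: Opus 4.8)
The plan is to deduce the proposition from the long exact sequence of the triple $([L],\,N/L\setminus W^s_{\rm loc},\,N/L)$, which makes sense because $W^s_{\rm loc}$ is disjoint from $L$ and hence $\{[L]\}\subseteq N/L\setminus W^s_{\rm loc}$. Write $A_q:=H_q(N/L\setminus W^s_{\rm loc},[L])$, $B_q:=H_q(N/L,[L])$ and $C_q:=H_q(N/L,N/L\setminus W^s_{\rm loc})$. Since $N/L\setminus W^s_{\rm loc}$ is positively invariant, $f_{N/L}$ restricts to it and induces endomorphisms of all three groups, with respect to which the triple sequence
\[
\cdots\to A_q\to B_q\xrightarrow{i_*}C_q\xrightarrow{\partial}A_{q-1}\to B_{q-1}\to\cdots
\]
is a sequence of equivariant maps. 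By Proposition \ref{prop:W2invariant} we already know that $B_q$ admits a canonical decomposition whose algebraic part is $CH_q(N/L)$, so the only genuinely new input needed is the following dynamical lemma: \emph{each $A_q$ admits a canonical decomposition with trivial algebraic part}, equivalently, every class of $A_q$ is annihilated by some power of $(f_{N/L})_*$.

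Granting this lemma, the rest is formal bookkeeping with Proposition \ref{prop:exact}. Applying it to the segment $B_q\xrightarrow{i_*}C_q\xrightarrow{\partial}A_{q-1}$ (exact at $C_q$, both outer terms having canonical decompositions) shows that $C_q$ admits a canonical decomposition and that $CH_q(N/L)=AB_q\xrightarrow{i_*|}AC_q\xrightarrow{\partial|}AA_{q-1}=\{0\}$ is exact at $AC_q$; hence $i_*$ maps $CH_q(N/L)$ onto $AC_q=AH_q(N/L,N/L\setminus W^s_{\rm loc})$. Knowing now that $C_q$ has a canonical decomposition, a second application of Proposition \ref{prop:exact} — to $A_q\to B_q\xrightarrow{i_*}C_q$, exact at $B_q$ — gives exactness of $AA_q=\{0\}\to CH_q(N/L)\xrightarrow{i_*|}AC_q$ at $CH_q(N/L)$, i.e. $i_*|$ is injective. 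Therefore $i_*$ restricts to an isomorphism $CH_q(N/L)\xrightarrow{\cong}AC_q=i_*CH_q(N/L)$ (equivariance of $i_*$ already guarantees that $i_*CH_q(N/L)$ consists of algebraic elements), and the canonical decomposition of $C_q$ is precisely $C_q={\rm gker}\,(f_{N/L})_*\oplus i_*CH_q(N/L)$, which is the statement.

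Thus everything reduces to the dynamical lemma, and this is where I expect the real difficulty to lie, for two separate reasons. Dynamically, by the very definition of $W^s_{\rm loc}$ every point of $N/L\setminus W^s_{\rm loc}$ is carried to the fixed point $[L]$ in finitely many steps; setting $C_n:=(f_{N/L}^n)^{-1}(\{[L]\})$ one obtains an increasing exhaustion $N/L\setminus W^s_{\rm loc}=\bigcup_n C_n$ by compacta with $f_{N/L}^n(C_n)=\{[L]\}$, so $(f_{N/L}^n)_*$ vanishes on $H_q(C_n,[L])$ because it factors through $H_q(\{[L]\},\{[L]\})=0$; the task is to upgrade this to the vanishing of the algebraic part of $A_q$. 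The technical obstacle is that $N/L\setminus W^s_{\rm loc}$ is not compact — it is the open complement of the closed set $W^s_{\rm loc}$ — so one must be careful both about the meaning of $H_q(N/L\setminus W^s_{\rm loc},[L])$ and about the validity of the triple exact sequence in \v{C}ech homology. I would handle this either by using \v{C}ech homology with compact supports (which agrees with ordinary \v{C}ech homology on the compact spaces occurring elsewhere in the paper) or by reducing, via the strong excision property of \v{C}ech homology, to compact pairs, and then checking that the collapsing of the exhausting compacta $C_n$ does force the algebraic part of $A_q$ to be zero. With the lemma in hand in whatever form the \v{C}ech setup makes available, the formal argument of the previous paragraph finishes the proof.
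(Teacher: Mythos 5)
Your formal skeleton is exactly the paper's: the long exact sequence of the triple $(N/L,\,N/L\setminus W^s_{\rm loc},\,[L])$, equivariance of all its arrows, the observation that $H_*(N/L,[L])$ already has the canonical decomposition with algebraic part $CH_*(N/L)$, the key lemma that $H_*(N/L\setminus W^s_{\rm loc},[L])$ has trivial algebraic part, and then Proposition \ref{prop:exact} to finish. Your two-step application of Proposition \ref{prop:exact} (once at $C_q$ for surjectivity, once at $B_q$ for injectivity) is a harmless repackaging of the paper's single appeal to the exactness of the sequence of algebraic parts. You have also correctly located the one genuine difficulty: $N/L\setminus W^s_{\rm loc}$ is open and noncompact, so for \v{C}ech homology neither the compact supports property nor the exactness of the triple sequence is available, and without compact supports the dynamical lemma itself is not proved.

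Where your proposal falls short is in resolving that difficulty. Neither of your two suggested remedies is carried out, and neither is the route the paper takes; both are problematic as stated. Switching to \v{C}ech homology with compact supports would change the group $H_*(N/L,N/L\setminus W^s_{\rm loc})$ itself, which is the domain of $\omega_{N/L}$ and is used as an ordinary \v{C}ech group throughout the rest of the paper, so you would owe a comparison of theories on this noncompact pair. Strong excision in \v{C}ech homology is stated only for compact pairs, so it does not directly tame the open set $N/L\setminus W^s_{\rm loc}$ or its exhaustion by the compacta $C_n$. The paper's actual fix is different and worth knowing: first take a filtration pair consisting of ANRs (these always exist on manifolds), in which case $N/L$ and its open subset $N/L\setminus W^s_{\rm loc}$ are ANRs and one may replace \v{C}ech homology by singular homology throughout; there both compact supports and the exact triple sequence hold, and your exhaustion argument proves the lemma verbatim. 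The statement for an \emph{arbitrary} filtration pair is then deduced by a second step your proposal omits entirely: a shift equivalence $r:N/L\to N'/L'$ carries $N/L\setminus W^s_{\rm loc}(N)$ into $N'/L'\setminus W^s_{\rm loc}(N')$, hence induces a shift equivalence of the relevant pairs, and Proposition \ref{prop:inv_shift} together with the commutative square relating $i$, $i'$ and $r$ transports the canonical decomposition and the identification of its algebraic part with $i'_*CH_*(N'/L')$. Without this transfer step (or a worked-out substitute for it) the proposition is only established for ANR filtration pairs.
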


Before proving the proposition we make the following observation. By definition points in $N/L \setminus W^s_{\rm loc}$ are those whose forward orbit eventually reaches $[L]$, and so $N/L \setminus W^s_{\rm loc} = \bigcup_{k \geq 0} f_{N/L}^{-k}([L])$. The definition of a filtration pair ensures that the map $f_{N/L}$ has the special property that $f_{N/L}^{-1}([L])$ is a neighbourhood of $[L]$, and so $N/L \setminus W^s_{\rm loc}$ is actually equal to $\bigcup_{k \geq 0} {\rm int}\ f_{N/L}^{-k}([L])$, where the interior is taken in $N/L$. Since these form an increasing sequence of open sets, for any compact $D$ subset of $N/L \setminus W^s_{\rm loc}$ there exists $k \geq 0$ such that $f_{N/L}^k(D) = [L]$.

\begin{proof}[Proof of Proposition \ref{prop:can_desc_omega}] We first prove the proposition when $N/L$ is an ANR. Notice that then its open subset $N/L \setminus W^s_{\rm loc}$ is also an ANR. Thus we may replace \v{C}ech homology with singular homology throughout, which in particular implies that the compact supports property and the exactness of the long exact sequence of a triple hold. We will use both properties in the proof.

Consider the long exact sequence for the triple $(N/L,N/L \backslash W^s_{\rm loc},[L])$: \[\xymatrix{\ldots \ar[r] & H_q(N/L \setminus W^s_{\rm loc},[L]) \ar[r] &  H_q(N/L,[L]) \ar[r]^-{i_*}  & H_q(N/L,N/L \backslash W^s_{\rm loc}) \ar[r] & \ldots }\] Each term of the sequence admits the endomorphism $(f_{N/L})_*$ and all arrows are equivariant by the naturality of the exact sequence of a triple. Every element in $H_*(N/L \backslash W^s_{\rm loc},[L])$ belongs to the generalized kernel of $(f_{N/L})_*$. This follows because any chain $z$ in $H_*(N/L \setminus W^s_{\rm loc},[L])$ has a compact support $D$ and for large enough $k$ we have $f_{N/L}^k(D) = \{[L]\}$ as discussed before the proof, so that $(f_{N/L})^k_*(z) = 0$. Thus $H_*(N/L \setminus W^s_{\rm loc},[L])$ admits a canonical decomposition with its algebraic part $A H_*(N/L \setminus W^s_{\rm loc},[L]) = 0$. We also have that $H_*(N/L,[L])$ has a canonical decomposition with its algebraic part being $CH_*(N/L)$. Thus by Proposition \ref{prop:exact} the group $H_*(N/L,N/L \setminus W^s_{\rm loc})$ also admits a canonical decomposition and the sequence of algebraic parts \[\xymatrix{\ldots \ar[r] & \underbrace{AH_q(N/L \setminus W^s_{\rm loc},[L])}_{=0} \ar[r] &  \underbrace{AH_q(N/L,[L])}_{=CH_q(N/L)} \ar[r]^-{i_*}  & AH_q(N/L,N/L \backslash W^s_{\rm loc}) \ar[r] & \ldots }\] is exact. Hence $i_*$ restricted to $CH_*(N/L)$ is an isomorphism onto its image and the canonical decomposition of $H_*(N/L,N/L \setminus W^s_{loc})$ is ${\rm gker}\ (f_{N/L})_* \oplus i_* CH_*(N/L)$.

We conclude by extending the result to an arbitrary filtration pair $(N',L')$. Pick a shift equivalence $r : N/L \longrightarrow N'/L'$ where $(N,L)$ is a filtration pair comprised of ANRs. We check that $r$ carries $N/L \setminus W^s_{\rm loc}(N)$ into $N'/L' \setminus W^s_{\rm loc}(N')$, where we have amplified the notation $W^s_{\rm loc}$ to record what filtration pair it refers to. Pick $[x] \in N/L \setminus W^s_{\rm loc}(N)$; this means that its forward orbit under $f_{N/L}$ eventually becomes $[L]$. Then the forward orbit of $r([x])$ under $f_{N'/L'}$ eventually becomes $r([L]) = [L']$, and so $r([x])$ does not belong to $W^s_{\rm loc}(N')$. Thus $r$ provides a shift equivalence $(N/L,N/L \setminus W^s_{\rm loc}(N)) \sim (N'/L',N'/L' \setminus W^s_{\rm loc}(N'))$ which in turn descends to a shift equivalence in \v{C}ech homology. By Proposition \ref{prop:inv_shift} we then have that $H_*(N'/L',N'/L' \setminus W^s_{\rm loc}(N'))$ has a canonical decomposition ${\rm gker}\ (f_{N'/L'})_* \oplus r_* i_* CH_*(N/L)$. Now, there is evidently a commutative square \[\xymatrix{(N/L,[L]) \ar[r]^-i \ar[d]_r & (N/L,N/L \setminus W^s_{\rm loc}(N)) \ar[d]^ r \\ (N'/L',[L']) \ar[r]^-{i'} & (N'/L',N'/L' \setminus W^s_{\rm loc}(N'))} \] which passing to homology shows that $r_* i_* CH_*(N/L) = i'_* r_* CH_*(N/L)$, and since $r_* CH_*(N/L) = CH_*(N'/L')$ by Proposition \ref{prop:W2invariant}, we get the desired result that $H_*(N'/L',N'/L' \setminus W^s_{\rm loc}(N'))$ has the canonical decomposition ${\rm gker}\ (f_{N'/L'})_* \oplus i'_* CH_*(N'/L')$.
\end{proof}

\begin{remark} \v{C}ech homology does not satisfy in general (for noncompact spaces) neither the compact supports property nor the exactness of the sequence of a triple. This is the reason why to prove the proposition we needed to hinge on an ANR pair first.
\end{remark}

Denote by $i_*| : CH_*(N/L) \longrightarrow i_*CH_*(N/L)$ the isomorphism obtained by restricting $i_*$ and by $\omega$ the canonical projection of $H_*(N/L,N/L \backslash W^s_{\rm loc})$ onto $i_* CH_*(N/L)$. Then we define $\omega_{N/L}$ as $(i_*|)^{-1} \circ \omega$; i.e. as the unique homomorphism which makes the following diagram commute: \[\xymatrix{& H_*(N/L,N/L \backslash W^s_{\rm loc}) \ar[d]^{\omega} \ar[dl]_{\omega_{N/L}} \\ CH_*(N/L) \ar[r]^-{i_*|}_-{\cong} & i_* CH_*(N/L) }\]

Using the general formula for the canonical projection in p. \pageref{pg:can_proj} it is easy to check that this definition agrees with the semiformal one given above. This construction also suggests an intuitive interpretation of the elements of the homological Conley index $CH_*(N/L)$ as ``algebraic $\omega$-limits'' of chains in $(N/L,N/L \backslash W^s_{\rm loc})$: the limit of such a chain is an element in $CH_*(N/L)$ by the definition of $\omega_{N/L}$, and every element in $CH_*(N/L)$ is the limit of some chain (namely, of its image under $i_*$).

We conclude by showing that the limit of a chain is ``independent'' of the filtration pair. Suppose $(N,L)$ and $(N',L')$ are two filtration pairs for the isolated invariant set $S$ and denote by $W^s_{\rm loc}(N)$ and $W^s_{\rm loc}(N')$ their local stable manifolds. We compare the two filtration pairs via a shift equivalence $r : N/L \longrightarrow N'/L'$ and prove the following:

\begin{proposition} \label{prop:indep_omegaNL} There is a commutative diagram \[\xymatrix{H_*(N/L,N/L \backslash W^s_{\rm loc}(N)) \ar[r]^-{r_*} \ar[d]_{\omega_{N/L}} & H_*(N'/L',N'/L' \backslash W^s_{\rm loc}(N')) \ar[d]_{\omega_{N'/L'}} \\ CH_*(N/L) \ar[r]^-{r_*} & CH_*(N'/L')}\]
\end{proposition}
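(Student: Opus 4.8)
The plan is to reduce the statement to the commutativity of a diagram involving only the inclusions $i$, $i'$ and the shift equivalence $r$, and then to invoke the uniqueness built into the definition of $\omega_{N/L}$. Recall that $\omega_{N/L}$ is characterized as $(i_*|)^{-1}\circ \omega$, where $\omega$ is the canonical projection of $H_*(N/L,N/L\setminus W^s_{\rm loc}(N))$ onto the algebraic part $i_*CH_*(N/L)$. I would first observe, exactly as in the last paragraph of the proof of Proposition \ref{prop:can_desc_omega}, that $r$ carries $N/L\setminus W^s_{\rm loc}(N)$ into $N'/L'\setminus W^s_{\rm loc}(N')$ (the forward orbit of $[x]$ hitting $[L]$ forces the forward orbit of $r([x])$ to hit $r([L])=[L']$), so that $r$ does induce an equivariant map $r_*$ on the relative homology groups in the statement, and there is a commutative square at the level of spaces
\[\xymatrix{(N/L,[L]) \ar[r]^-i \ar[d]_r & (N/L,N/L\setminus W^s_{\rm loc}(N)) \ar[d]^r \\ (N'/L',[L']) \ar[r]^-{i'} & (N'/L',N'/L'\setminus W^s_{\rm loc}(N'))}\]

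Passing to \v{C}ech homology gives $r_*\circ i_* = i'_*\circ r_*$. Now take $z \in H_*(N/L,N/L\setminus W^s_{\rm loc}(N))$ and decompose it canonically as $z = z_n + z_a$ with $z_n \in {\rm gker}\,(f_{N/L})_*$ and $z_a \in i_*CH_*(N/L)$; by definition $\omega_{N/L}(z)$ is the unique element of $CH_*(N/L)$ with $i_*(\omega_{N/L}(z)) = z_a$. Since $r_*$ is equivariant, it carries the generalized kernel into the generalized kernel and (by Proposition \ref{prop:inv_shift}, applied to the shift equivalence in relative homology obtained above) it carries the algebraic part $i_*CH_*(N/L)$ isomorphically onto the algebraic part $i'_*CH_*(N'/L')$. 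Hence $r_*(z) = r_*(z_n) + r_*(z_a)$ is precisely the canonical decomposition of $r_*(z)$, so that $\omega_{N'/L'}(r_*(z))$ is the unique element of $CH_*(N'/L')$ whose image under $i'_*$ equals $r_*(z_a)$. But $i'_*(r_*(\omega_{N/L}(z))) = r_*(i_*(\omega_{N/L}(z))) = r_*(z_a)$ using the commutative square, and $r_*$ restricted to $CH_*(N/L)$ is an isomorphism onto $CH_*(N'/L')$ by Proposition \ref{prop:W2invariant}. By uniqueness this forces $\omega_{N'/L'}(r_*(z)) = r_*(\omega_{N/L}(z))$, which is the asserted commutativity.

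The only genuinely delicate point is the claim that $r_*$ respects the two canonical decompositions, i.e. that it sends generalized kernel to generalized kernel and algebraic part to algebraic part; this is where one must be careful that $r_*$ is a shift equivalence of the relative homology groups (not merely an equivariant map), so that Proposition \ref{prop:inv_shift}.(i) applies and the restriction to algebraic parts is an isomorphism onto $i'_*CH_*(N'/L')$ rather than merely into it. Everything else is formal diagram-chasing together with the uniqueness clause in the definition of $\omega_{N/L}$. I would also note that, since the definition of $\omega_{N/L}$ and the whole discussion take place in \v{C}ech homology on compact pairs, no issues with the failure of exactness or compact supports arise here — those were already dealt with inside the proof of Proposition \ref{prop:can_desc_omega}, on which this argument rests.
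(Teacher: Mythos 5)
Your argument is correct and follows essentially the same route as the paper's proof: both rest on the commutative square relating $i$, $i'$ and $r$, the fact that the equivariant shift equivalence $r_*$ respects the canonical decompositions established in Proposition \ref{prop:can_desc_omega}, and the uniqueness built into the definition of $\omega_{N/L}$ as $(i_*|)^{-1}\circ\omega$. The paper phrases this diagrammatically (a commutative square with the canonical projections $i_*\circ\omega_{N/L}$ and $i'_*\circ\omega_{N'/L'}$ as vertical arrows, with the defining triangles tacked on at the sides), whereas you chase an element through its canonical decomposition, but the content is identical.
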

\begin{proof} Recall from the proof of Proposition \ref{prop:can_desc_omega} that $r$ provides a shift equivalence between the pairs $(N/L,N/L \setminus W^s_{\rm loc})$ and $(N'/L',N'/L' \setminus W^s_{\rm loc})$, so that the above diagram makes sense.

Denote by $i : (N/L,[L]) \subseteq (N/L,N/L \backslash W^s_{\rm loc}(N))$ the inclusion. The composition $i_* \circ \omega_{N/L}$ is by definition just the canonical projection of $H_*(N/L, N/L \backslash W^s_{\rm loc}(N))$ onto $i_* CH_*(N/L)$, and similarly for the pair $(N',L')$. Now, since $r_*$ is an equivariant homomorphism, it preserves canonical decompositions. Thus there exists a commutative diagram \[ \xymatrix{H_*(N/L,N/L \backslash W^s_{\rm loc}(N)) \ar[r]^-{r_*} \ar[d]_{i_* \circ \omega_{N/L}} & H_*(N'/L',N'/L' \backslash W^s_{\rm loc}(N')) \ar[d]_{i'_* \circ \omega_{N'/L'}} \\ i_* CH_*(N/L) \ar[r]^-{r_*|} & i'_* CH_*(N'/L')) }\] where $r_*|$ simply means the appropriate restriction of the map $r_*$. Tacking on the commutative triangles that define $\omega$ onto the sides we have another commutative diagram \[ \xymatrix{& H_*(N/L,N/L \backslash W^s_{\rm loc}(N)) \ar[r]^-{r_*} \ar[d]_{i_* \circ \omega_{N/L}} \ar[dl]_{\omega_{N/L}} & H_*(N'/L',N'/L' \backslash W^s_{\rm loc}(N')) \ar[d]_{i'_* \circ \omega_{N'/L'}} \ar[dr]^{\omega_{N'/L'}} & \\ CH_*(N/L) \ar[r]^-{i_*|}_-{\cong} &  i_* CH_*(N/L) \ar[r]^-{r_*|} & i'_* CH_*(N'/L') & CH_*(N'/L') \ar[l]_-{i'_*|}^-{\cong} }\] which proves the proposition.
\end{proof}

\subsection{The $\Omega_{N/L}$--limit map} \label{subsec:OmegaNL} So far we have discussed how to define the limit of a chain $c$ which lies in the vicinity of the invariant set $S$. Now we generalize this to chains that lie anywhere in phase space.

\begin{example} Let $S$ and $(N,L)$ be as in Example \ref{ex:omega}. Refer to Figure \ref{fig:receiver3}.(a). We have represented part of the stable manifold of $S$ that extends beyond the isolating neighbourhood $N$; a couple of trajectories contained in it and converging to $S$ are depicted to suggest this. Consider a chain $c$ very far away from $S$ (in particular, outside of $N$). We try to replicate the process described above by taking iterates of the chain. The part of the chain that is contained in the stable manifold of $S$ will eventually enter $N$, but there is no reason to expect that the rest of the chain will enter $N$ as well. This is suggested in Figure \ref{fig:receiver3}.(a): by the $n$th iterate the hollow points (the intersection of the iterates of the chain with the stable manifold) enter $N$, but the iterated chain $f^n \circ c$ itself is so long that it is not contained in $N$. Looking back at (B) in Example \ref{ex:omega} we see how to overcome this difficulty. As explained there, the limit of a chain should only depend on the portion of the chain that lies in an infinitesimal neighbourhood of the local stable manifold. Therefore we could start by trimming the chain $c$ down to a tiny portion $c'$ near the stable manifold; so small that its $n$th iterate is indeed contained in $N$. This is illustrated in Figure \ref{fig:receiver3}.(b), where the original $c$ is shown in dashed and $c'$ in solid linestyles. Thereafter we know how to proceed: we can then regard the chain $f^n \circ c'$ as an object in $N/L$ and compute its limit via the map $\omega_{N/L}$. Since we did an extra $n$ steps to enter $N$, we must undo them after the limit has been taken.
\end{example}



\begin{figure}[h!]
\null\hfill
\subfigure[]{
\begin{pspicture}(0,0)(12.5,9)
	\rput[bl](0,0){\scalebox{0.8}{\includegraphics{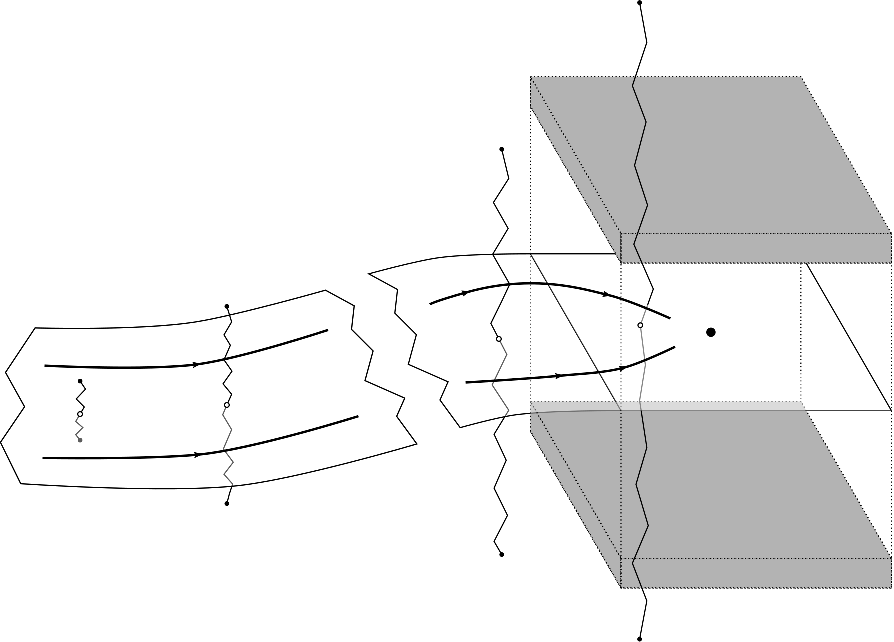}}}
	\rput(0.8,3){$c$}
	\rput(5,2){$W^s(S)$}
        \rput[b](10,4.2){$S$}
	\rput[r](8.5,0.3){$f^k \circ c$}
\end{pspicture}}
\hfill
\subfigure[]{
\begin{pspicture}(0,0)(12.5,9)
	\rput[bl](0,0){\scalebox{0.8}{\includegraphics{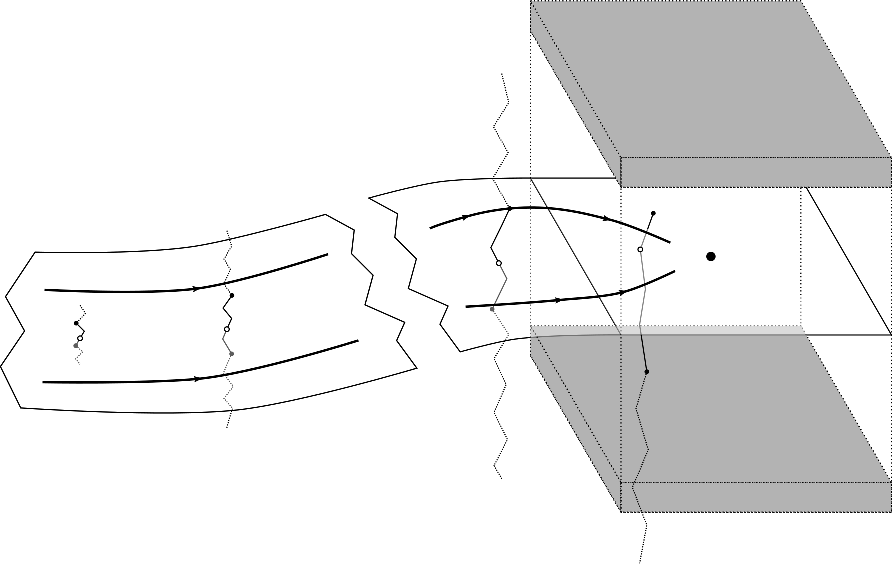}}}
	\rput(0.8,3){$c'$}
	\rput(5,2){$W^s(S)$}
         \rput[b](10,4.2){$S$}
	\rput[l](9,2.6){$f^k \circ c'$}
\end{pspicture}}
\hfill\null
\caption{\label{fig:receiver3}}
\end{figure}

\medskip

{\it Admissible pairs.} The delicate point in formalizing this consists in defining what chains $c$ ``converge'' to $S$. The situation considered in the example above is slightly misleading in this regard because of its simplicity. In essence, for a chain $c$ to have a well defined limit we need that it (rather, its support $Z := |c|$) has a distinguished subset $Z_0$ that converges to $S$ ``head on'' and such that any infinitesimal neighbourhood of that subset gets stretched along the local unstable manifold of $S$. In the example above this distinguished subset $Z_0$ was just the intersection of $|c|$ with the stable manifold of $S$, but this does not always work. Consider the phase portrait of Figure \ref{fig:exnontrivial}. It shows a saddle point $S$ and an isolating neighbourhood $N$ for it. Let $c$ be the $1$--chain shown as a jagged line and notice that, unlike in the simple example given above, $|c| \cap W^s(S)$ does not converge to $S$ in any reasonable sense because it gets stretched onto the orbit homoclinic to $S$. However, there is a subset of $|c|$ which has the required properties; namely the singleton $Z_0$ represented as a hollow point in the drawing. The point itself gets attracted by $S$, and a tiny neighbourhood $U$ of this point in $Z = |c|$ will get stretched onto the local unstable manifold of $S$. If we trim $c$ down to a chain $c'$ by removing the part outside $U$ we can still carry out the same procedure described in Example \ref{ex:omega}. (In this case $CH_1(N/L)$ is generated by a single element and, at least intuitively, the limit of the chain $c$ is precisely this generator).

\begin{figure}[h!]
\begin{pspicture}(0,0)(7.5,5.5)
	\rput[bl](0,0){\scalebox{0.8}{\includegraphics{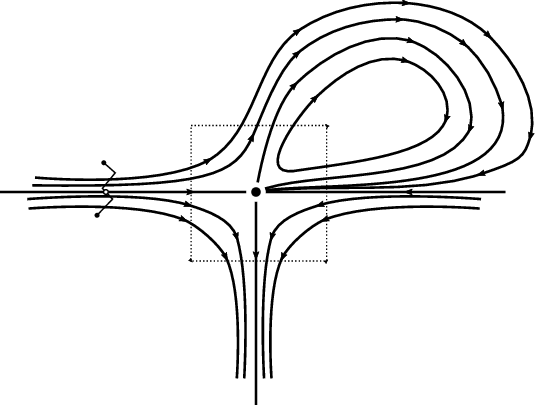}}}
    \rput[tl](4.5,2){$N$}
    \rput[bl](1.4,3.4){$c$}
    \rput[bl](3.1,2.5){$S$}
\end{pspicture}
\caption{\label{fig:exnontrivial}}
\end{figure}

The following definition captures these ideas. For technical reasons it is cast for compact pairs $(Z,Z_0)$ instead of supports of chains. Fix a filtration pair $(N,L)$ for $S$. A pair $(Z,Z_0)$ of compact subsets of phase space is \emph{admissible} (for $S$) if there exist $n \geq 0$ and a neighbourhood $U$ of $Z_0$ in $Z$ such that:
\begin{itemize}
	\item[(A1)] $f^n(Z_0) \subseteq W^s_{\rm loc}$.
	\item[(A2)] $f^n(U \backslash Z_0) \subseteq N \backslash W^s_{\rm loc}$.
\end{itemize}

We will see in Theorem \ref{teo:indepOmegaNL} that the notion of admissibility depends only on $S$ and not the filtration pair $(N,L)$. The following remark shows that the asymptotic behaviour of the definition is as expected:

\begin{remark} \label{rem:posterior} If the admissibility conditions (A1) and (A2) are satisfied for some $n$ and $U$, then for any $n' \geq n$ there exists $U' \subseteq U$ such that $m$ and $V$ satisfy the conditions as well.
\end{remark}
\begin{proof} Observe first that since $f^n(Z_0) \subseteq W^s_{\rm loc}$ and the local stable manifold is positively invariant and disjoint from $L$, we have in fact that $f^n(Z_0), f^{n+1}(Z_0), \ldots, f^{n'}(Z_0)$ are all disjoint from $L$. By the continuity of $f$ there exists a neighbourhood $U'$ of $Z_0$ in $Z$ such that $f^n(U'), f^{n+1}(U'), \ldots, f^{n'}(U')$ are all disjoint from $L$ as well. Clearly we may take $U'$ to be contained in $U$ and then $f^n(U') \subseteq f^n(U) \subseteq N$. Since $f^n(U')$ is disjoint from the exit set $L$ by construction, its image under $f$ still lies in $N$; that is, $f^{n+1}(U') \subseteq N$. Again this set is disjoint from $L$ by assumption, and so $f^{n+2}(U') \subseteq N$. Repeating this argument we see inductively that $f^n(U'),\ldots, f^{n'}(U') \subseteq N$. Since $f^n(U' \setminus Z_0) \subseteq f^n(U \setminus Z_0) \subseteq N \setminus W^s_{\rm loc}$, the positive invariance of $N \setminus W^s_{\rm loc}$ in $N$ then implies that $f^{n'}(U' \setminus Z_0) \subseteq N \setminus W^s_{\rm loc}$ as well.
\end{proof}

The formal definition of $\Omega_{N/L}$ for chains in $(Z,Z \setminus Z_0)$, or rather elements from $H_*(Z,Z \setminus Z_0)$, is now as follows. Clearly in the definition of an admissible pair we can assume $U$ to be compact by reducing it slightly if necessary. Then we can excise the open set $Z \backslash U$ from the pair $(Z,Z \backslash Z_0)$ to obtain an inclusion induced isomorphism $H_*(U,U \backslash Z_0) \cong H_*(Z,Z \backslash Z_0)$. (The excision axiom is valid for \v{C}ech homology without restrictions; see \cite[Theorem 6.1, p. 243]{eilenbergsteenrod1}). Denote by $\pi : (N,N \backslash W^s_{\rm loc}) \longrightarrow (N/L,N/L \backslash W^s_{\rm loc})$ the quotient map and regard $f^n$ as a map $f^n : (U,U \backslash Z_0) \longrightarrow (N,N \backslash W^s_{\rm loc})$. 

\begin{definition} The map $\Omega_{N/L}$ is defined as the composition \[\xymatrixcolsep{3pc} \xymatrix{H_*(Z, Z \backslash Z_0) & H_*(U, U \backslash Z_0) \ar[l]_-{\cong} \ar[r]^-{\pi_* \circ f_*^n} & H_*(N/L, N/L \backslash W^s_{\rm loc}) \ar[rr]^-{\phi_{N/L}^{-n} \circ \omega_{N/L}} & & CH_*(N/L)}\]
\end{definition}

Observe that $(N,W^s_{\rm loc})$ is always admissible (taking $n = 0$ and $U = N$ in the definition) and for that pair the definition of $\Omega_{N/L}$ reduces to $\omega_{N/L}$, so that indeed $\Omega$ extends the local notion of a limit of a chain.

We need to show that the definition is correct; i.e. for a given admissible pair $(Z,Z_0)$ the map $\Omega_{N/L}$ does not depend on the choices of $n$ and $U$:

\begin{proposition} The map $\Omega_{N/L}$ is independent of both $n$ and $U$ (as long as they satisfy the admissibility conditions).
\end{proposition}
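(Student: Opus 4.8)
The plan is to prove independence of $U$ (for a fixed $n$) and independence of $n$ (for a fixed, suitably small $U$) separately, and then combine them using Remark \ref{rem:posterior}. It is convenient to abbreviate $\Phi_{n,U} := \phi_{N/L}^{-n}\circ\omega_{N/L}\circ\pi_*\circ f^n_* : H_*(U,U\setminus Z_0)\to CH_*(N/L)$ and to write $e_U : H_*(U,U\setminus Z_0)\to H_*(Z,Z\setminus Z_0)$ for the excision isomorphism, so that the value of $\Omega_{N/L}$ computed from the data $(n,U)$ is $\Phi_{n,U}\circ e_U^{-1}$; we must show this does not depend on $(n,U)$.

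First I would treat independence of $U$ for a fixed $n$. If $(n,U)$ and $(n,U')$ are both admissible with $U'\subseteq U$, the inclusion $\iota : (U',U'\setminus Z_0)\hookrightarrow (U,U\setminus Z_0)$ induces an isomorphism $\iota_*$ with $e_U\circ\iota_* = e_{U'}$, and since $f^n|_{U'}$ factors through $\iota$ one gets $\Phi_{n,U'} = \Phi_{n,U}\circ\iota_*$; hence $\Phi_{n,U'}\circ e_{U'}^{-1} = \Phi_{n,U}\circ e_U^{-1}$. For two arbitrary admissible neighbourhoods $U_1,U_2$ with the same $n$ one shrinks $U_1\cap U_2$ slightly to a compact neighbourhood $U'$ of $Z_0$ in $Z$ (still admissible for $n$, since (A1) involves only $Z_0$ and (A2) is monotone in $U$) and applies the previous remark twice. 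This step is purely formal, resting on naturality of excision.

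Next, to pass from $n$ to $n+1$: fixing an admissible $(n,U)$, Remark \ref{rem:posterior} supplies $U'\subseteq U$ with $(n+1,U')$ admissible; then $(n,U')$ is admissible too (monotonicity of (A2)), so by the first step it suffices to compare $\Phi_{n,U'}$ with $\Phi_{n+1,U'}$. The hard part will be the identity of maps of pairs $\pi\circ f^{n+1}|_{U'} = f_{N/L}\circ\pi\circ f^n|_{U'}$ from $(U',U'\setminus Z_0)$ to $(N/L, N/L\setminus W^s_{\rm loc})$. Both sides are legitimate: (A1) and (A2) force $f^n(U'),f^{n+1}(U')\subseteq N$ and $f^n(U'\setminus Z_0),f^{n+1}(U'\setminus Z_0)\subseteq N\setminus W^s_{\rm loc}$; moreover $\pi$ carries $N\setminus W^s_{\rm loc}$ into $N/L\setminus W^s_{\rm loc}$ and $f_{N/L}$ preserves the latter. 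The pointwise check is a short case analysis according to whether $f^n(x)$ or $f^{n+1}(x)$ lies in $L$, the delicate case $f^n(x)\in L$ being handled by property (iii) of a filtration pair ($f(L)\cap\overline{N\setminus L}=\emptyset$). Granting this, one gets $\pi_*\circ f^{n+1}_* = (f_{N/L})_*\circ\pi_*\circ f^n_*$; combined with the equivariance $\omega_{N/L}\circ(f_{N/L})_* = \phi_{N/L}\circ\omega_{N/L}$ — immediate from $\omega_{N/L} = (i_*|)^{-1}\circ\omega$, since the canonical projection $\omega$ commutes with $(f_{N/L})_*$ and $i_*|$ conjugates $\phi_{N/L}$ to the restriction of $(f_{N/L})_*$ to $i_*CH_*(N/L)$ — one computes
\[
\Phi_{n+1,U'} = \phi_{N/L}^{-(n+1)}\circ\omega_{N/L}\circ(f_{N/L})_*\circ\pi_*\circ f^n_* = \phi_{N/L}^{-(n+1)}\circ\phi_{N/L}\circ\omega_{N/L}\circ\pi_*\circ f^n_* = \Phi_{n,U'}.
\]

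Iterating this gives $\Phi_{n,U'} = \Phi_{n',U'}$ for every $n'\ge n$, and finally, for two arbitrary admissible choices $(n_1,U_1)$ and $(n_2,U_2)$ with $n_1\le n_2$, Remark \ref{rem:posterior} provides a common admissible neighbourhood $U\subseteq U_1\cap U_2$ for which $(n_2,U)$ — hence also $(n_1,U)$ — is admissible; then the value of $\Omega_{N/L}$ from $(n_1,U_1)$ equals that from $(n_1,U)$ (first step), equals that from $(n_2,U)$ (second step, iterated), equals that from $(n_2,U_2)$ (first step). The only genuinely non-formal ingredient is the commutation identity $\pi\circ f^{n+1}=f_{N/L}\circ\pi\circ f^n$ on $U'$: intuitively it merely says that ``iterate once more in $N$, then project'' equals ``project, then iterate once in $N/L$'', but one must be careful at orbits entering the exit set $L$, where $f_{N/L}$ collapses everything to $[L]$, and it is precisely there that property (iii) of filtration pairs is used.
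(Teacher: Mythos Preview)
Your proof is correct and follows essentially the same route as the paper's: reduce to a common $U$ by monotonicity, then compare different exponents via the identity $\pi\circ f^{n+1} = f_{N/L}\circ\pi\circ f^n$ on a suitably shrunk neighbourhood, combined with the equivariance $\omega_{N/L}\circ(f_{N/L})_* = \phi_{N/L}\circ\omega_{N/L}$. The paper differs only in organisation: it jumps from $n$ to $n'$ in a single step by invoking the specific $U'$ built in the \emph{proof} of Remark~\ref{rem:posterior} (whose intermediate iterates all stay in $N$, in fact in $N\setminus L$), so the delicate case $f^n(x)\in L$ that you handle via property~(iii) never arises there. One phrasing to tighten: in ``Iterating this gives $\Phi_{n,U'}=\Phi_{n',U'}$'' the neighbourhood $U'$ produced by a single application of the remark is not a priori admissible for the intermediate exponents, so you should either shrink it at each step (reinvoking independence of $U$) or apply the remark once with the target $n'$ and note from its proof that every intermediate $(n+j,U')$ is then admissible.
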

\begin{proof} For this proof only we adopt the notation $\Omega_{\{n,U\}}$ to record the choice of $n$ and $U$ in the definition of $\Omega_{N/L}$. We need to show that $\Omega_{\{n,U\}} = \Omega_{\{n',U'\}}$ whenever $\{n,U\}$ and $\{n',U'\}$ satisfy the admissibility conditions.

It is straightforward to check that if $\{n,U\}$ satisfies the admissibility conditions and $V \subseteq U$ then $\{n,V\}$ also satisfies the conditions and $\Omega_{\{n,U\}} = \Omega_{\{n,V\}}$ because the first two arrows in the definition $\Omega_{N/L}$ commute with inclusion induced maps. Thus replacing $U$ and $U'$ with $U \cap U'$ we may assume that $U = U'$ and $n' \geq n$.

By Remark \ref{rem:posterior} there exists a neighbourhood $U'$ of $Z_0$ in $Z$ contained in $U$ such that $\{n',U'\}$ satisfies the admissibility conditions. Since $U' \subseteq U$, again we have $\Omega_{\{n,U\}} = \Omega_{\{n,U'\}}$ and we only need to show that $\Omega_{\{n,U'\}} = \Omega_{\{n',U'\}}$. By construction the first $n'-n$ iterates of any point $x \in f^n(U')$ remain in $N$, and so we have $f_{N/L}^{n'-n}([x]) = [f^{n'-n}(x)]$. Thus the following diagram commutes: \[\xymatrix{(U',U' \backslash Z_0) \ar[r]^-{f^n} \ar[d]^{\rm Id} & (N,N \backslash W^s_{\rm loc}) \ar[r]^-{\pi} & (N/L,N/L \backslash W^s_{\rm loc}) \ar[d]^{f_{N/L}^{n'-n}} \\ (U',U' \backslash Z_0) \ar[r]^-{f^{n'}} & (N,N \backslash W^s_{\rm loc}) \ar[r]^-{\pi} & (N/L,N/L \backslash W^s_{\rm loc})}\] Passing to (\v{C}ech) homology we see that $(f_{N/L})_*^{n'-n} \circ \pi_* \circ f^n_* = \pi_* \circ f^{n'}_*$. Composing both sides on the left with $\omega_{N/L}$ and using the equivariance relation $\omega_{N/L} \circ (f_{N/L})_* = \phi_{N/L} \circ \omega_{N/L}$ yields \[\phi_{N/L}^{n'-n} \circ \omega_{N/L} \circ \pi_* \circ f^n_* = \omega_{N/L} \circ \pi_* \circ f^{n'}_*.\] Composing this again on the left with $\phi_{N/L}^{-n'}$ and recalling the definition of $\Omega$ shows that $\Omega_{\{n,U'\}} = \Omega_{\{n',U'\}}$.
\end{proof}

To close this section we show that the notion of an admissible pair and the limit of a chain are essentially independent of the filtration pair:

\begin{theorem} \label{teo:indepOmegaNL} The admissibility condition on a pair $(Z,Z_0)$ is independent of the filtration pair. If $(Z,Z_0)$ is admissible and $r : N/L \longrightarrow N'/L'$ is a standard shift equivalence with semi-lag $k$ (Example \ref{ex:standard}) then there is a commutative diagram \[\xymatrix{ & H_*(Z,Z \setminus Z_0) \ar[ld]_{\Omega_{N/L}} \ar[rd]^{\Omega_{N'/L'}} & \\ CH_*(N/L) \ar[rr]^{\phi_{N'/L'}^{-k} \circ r_*} & & CH_*(N'/L')}\]
\end{theorem}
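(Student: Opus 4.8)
The plan is to establish the two assertions in order, since independence of admissibility is used in the proof of the commuting triangle. The dynamical input is a \emph{uniform absorption} lemma: for any filtration pair $(N,L)$ the sets $f^j(W^s_{\rm loc}(N))$ form a decreasing sequence of compacta (because $W^s_{\rm loc}(N)$ is positively invariant) whose intersection is precisely $S$ --- the inclusion $S\subseteq\bigcap_j f^j(W^s_{\rm loc}(N))$ is immediate, while conversely a diagonal argument produces, for any point of the intersection, a full orbit contained in $\overline{N\setminus L}$, and $S={\rm Inv}(\overline{N\setminus L})$. Hence $f^j(W^s_{\rm loc}(N))\to S$ uniformly. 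Since $S\subseteq{\rm int}\,N'\setminus L'$ for any second filtration pair $(N',L')$ (a routine consequence of the filtration pair axioms), there is $j_0$ with $f^j(W^s_{\rm loc}(N))\subseteq{\rm int}\,N'\setminus L'$ for all $j\geq j_0$; as each such set is a forward orbit segment remaining in $N'\setminus L'$, this yields $f^{j_0}(W^s_{\rm loc}(N))\subseteq W^s_{\rm loc}(N')$, and symmetrically $f^{j_1}(W^s_{\rm loc}(N'))\subseteq W^s_{\rm loc}(N)$ for some $j_1$.

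To see that admissibility does not depend on the filtration pair, take admissibility data $(n,U)$ for $(N,L)$ and fix $n'\geq n+j_0$. Using Remark \ref{rem:posterior} for $(N,L)$ at time $n'+j_1$ I shrink $U$ to $U_1$, and shrink once more to $U'\subseteq U_1$ with $f^{n'}(U')\subseteq{\rm int}\,N'$, which is possible because $f^{n'}(Z_0)\subseteq f^{n'-n}(W^s_{\rm loc}(N))\subseteq{\rm int}\,N'$. Condition (A1) for $(n',U')$ is immediate from $f^{n'}(Z_0)\subseteq W^s_{\rm loc}(N')$, and (A2) follows by contradiction: if $f^{n'}(u)\in W^s_{\rm loc}(N')$ for some $u\in U'\setminus Z_0$, then $f^{n'+j_1}(u)\in W^s_{\rm loc}(N)$, contradicting the conclusion $f^{n'+j_1}(u)\in N\setminus W^s_{\rm loc}(N)$ of Remark \ref{rem:posterior}. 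Since the two filtration pairs play symmetric roles, admissibility is a symmetric relation between them, hence the same for all filtration pairs.

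For the triangle I would choose $n$ large and $U'$ small enough that, simultaneously, $(n,U')$ is admissibility data for $(N,L)$; $(n+k,U')$ is admissibility data for $(N',L')$ (the construction above with $n'=n+k$); $f^n(U')\subseteq{\rm int}\,N\setminus L$; and $f^{n+k}(U')\subseteq{\rm int}\,N'$. The last two hold because, once $n$ is large, $f^n(Z_0)$ and $f^{n+k}(Z_0)$ are forced into the neighbourhoods ${\rm int}\,N\setminus L$ and ${\rm int}\,N'$ of $S$ by the uniform absorption. The point of $f^n(U')\cap L=\emptyset$ is that then the standard shift equivalence $r$ with semi-lag $k$ of Example \ref{ex:standard} --- which sends $[y]$ to $[f^k(y)]$ when $y\in N\setminus L$ and $f^k(y)\in N'\setminus L'$, and to $[L']$ otherwise --- satisfies $r\circ\pi\circ f^n=\pi'\circ f^{n+k}$ exactly as maps of pairs $(U',U'\setminus Z_0)\to(N'/L',\,N'/L'\setminus W^s_{\rm loc}(N'))$; this is a short case check using that $f^n(x)\in N\setminus L$ and $f^{n+k}(x)\in N'$ for every $x\in U'$. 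Passing to \v{C}ech homology and composing with the excision isomorphism $H_*(Z,Z\setminus Z_0)\cong H_*(U',U'\setminus Z_0)$ gives $r_*\circ\pi_*\circ f^n_*=\pi'_*\circ f^{n+k}_*$. Feeding this into the definition of $\Omega_{N'/L'}$, and then applying Proposition \ref{prop:indep_omegaNL} in the form $\omega_{N'/L'}\circ r_*=r_*\circ\omega_{N/L}$ together with the equivariance of $r_*$ (so that $\phi_{N'/L'}^{-(n+k)}\circ r_*=\phi_{N'/L'}^{-k}\circ r_*\circ\phi_{N/L}^{-n}$ on algebraic parts), the composition collapses to $\phi_{N'/L'}^{-k}\circ r_*\circ\Omega_{N/L}$, which is exactly the claimed triangle.

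The main obstacle is the first part: proving the uniform absorption lemma and, more delicately, carrying out the shrinking of $U$ in the verification of (A2) for the new filtration pair. The subtlety is that a point of $U\setminus Z_0$ may well leave and later re-enter the isolating neighbourhoods, so one cannot argue that $U\setminus Z_0$ avoids $W^s(S)$; it is the $j_1$-step comparison between $W^s_{\rm loc}(N)$ and $W^s_{\rm loc}(N')$ that rescues the argument. In the second part the only delicate point is recognising that an honest equality of maps (rather than a mere homotopy, which would also suffice but be clumsier) requires pushing $n$ far enough that $f^n(U')$ is disjoint from $L$.
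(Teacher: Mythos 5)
Your proof is correct, and it splits naturally into two halves of different character. The second half (the commuting triangle) is essentially the paper's own argument: you arrange the pointwise identity $r\circ\pi\circ f^{n}=\pi'\circ f^{n+k}$ on a sufficiently small $U'$, pass to \v{C}ech homology, and collapse the composition using Proposition \ref{prop:indep_omegaNL} together with the equivariance of $r_*$, exactly as in the paper (your sufficient conditions $f^n(U')\subseteq N\setminus L$ and $f^{n+k}(U')\subseteq N'$ are a slight variant of the paper's condition $\pi(f^n(U))\cap r^{-1}([L'])=\emptyset$, and your case check correctly handles the possibility $f^{n+k}(x)\in L'$, where both sides give $[L']$). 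Where you genuinely diverge is the first half. The paper deduces admissibility for $(N',L')$ from that same identity plus the inclusions $r(W^s_{\rm loc}(N))\subseteq W^s_{\rm loc}(N')$ and $r(N/L\setminus W^s_{\rm loc}(N))\subseteq N'/L'\setminus W^s_{\rm loc}(N')$, transferring (A1) and (A2) verbatim with lag $n+k$ and never leaving the quotient spaces. You instead prove a uniform absorption lemma, $\bigcap_j f^j(W^s_{\rm loc}(N))=S$, obtain the mutual inclusions $f^{j_0}(W^s_{\rm loc}(N))\subseteq W^s_{\rm loc}(N')$ and $f^{j_1}(W^s_{\rm loc}(N'))\subseteq W^s_{\rm loc}(N)$, and verify (A2) by the $j_1$-step contradiction via Remark \ref{rem:posterior}. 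This is a valid alternative: it is dynamically self-contained (no appeal to the internal structure of a shift equivalence) and makes the symmetry between the two filtration pairs manifest, at the cost of invoking the standard but unstated fact $S\subseteq{\rm int}(N'\setminus L')$ (implicit in the Franks--Richeson definition of a filtration pair over an isolating neighbourhood) and of producing a larger lag $n'\geq n+j_0$ than the paper's $n+k$. Both routes work.
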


Recalling the paragraph after Proposition \ref{prop:W2invariant}, the theorem essentially says that the maps $\Omega_{N/L}$ and $\Omega_{N'/L'}$ are the same operator expressed in two different ``coordinate systems'' (afforded by the filtration pairs) save for a shift in the origin of times (due to the $\phi_{N'/L'}^{-k}$ factor).

\begin{proof}[Proof of Theorem \ref{teo:indepOmegaNL}] First we show that if $(Z,Z_0)$ is admissible for $(N,L)$ then it is also admissible for $(N',L')$. An entirely symmetric argument proves the converse, showing that the admissibility condition is independent of the filtration pair. Denote by $\pi : N \longrightarrow N/L$ the canonical projection and similarly for $\pi'$.

Let $(Z,Z_0)$ be admissible for $(N,L)$. Then there exist $n$ and a neighbourhood $U$ of $Z_0$ in $Z$ such that $f^n(Z_0) \subseteq W^s_{\rm loc}(N)$ and $f^n(U \backslash Z_0) \subseteq N \backslash W^s_{\rm loc}(N)$. Projecting onto $N/L$ and identifying as usual the local stable manifold with its image in $N/L$ we then have that $\pi \circ f^n(Z_0) \subseteq W^s_{\rm loc}(N)$ and $\pi \circ f^n(U \backslash Z_0) \subseteq N/L \backslash W^s_{\rm loc}(N)$. Consider the set $r^{-1}([L']) \subseteq N/L$. This is a compact set disjoint from $W^s_{\rm loc}(N)$ since $r(W^s_{\rm loc}(N)) \subseteq W^s_{\rm loc}(N')$ and the latter set does not contain $[L']$. In particular $\pi \circ f^n(Z_0)$ is disjoint from $r^{-1}([L'])$ and, by choosing $U$ sufficiently small, we can achieve $\pi \circ f^n(U) \cap r^{-1}([L']) = \emptyset$ as well. Since $r$ is a standard shift equivalence, we then have that $r$ acting on any point $[x]$ with $x \in f^n(U)$ is just $[f^k(x)]$. In other words, $r \circ \pi|_{f^n(U)} = \pi' \circ f^k|_{f^n(U)}$. Obviously this relation holds also for the restrictions $|_{f^n(Z_0)}$ and $|_{f^n(U \backslash Z_0)}$ since $Z_0 \subseteq U$.

Starting with $\pi \circ f^n(Z_0) \subseteq W^s_{\rm loc}(N)$, applying $r$ to both sides and using the above relation we get \[r(\pi \circ f^n(Z_0)) \subseteq r(W^s_{\rm loc}(N)) \subseteq W^s_{\rm loc}(N') \ \Rightarrow \ \pi' \circ f^k(f^n(Z_0)) \subseteq W^s_{\rm loc}(N'),\] that is, $f^{n+k}(Z_0) \subseteq W^s_{\rm loc}(N')$. An analogous computation, this time starting with $\pi \circ f^n(U \backslash Z_0) \subseteq N \backslash W^s_{\rm loc}(N')$, yields $\pi' \circ f^{n+k}(U \backslash Z_0) \subseteq N' \backslash W^s_{\rm loc}(N')$. These show that $(Z,Z_0)$ is admissible for $(N',L')$.

The argument of the previous paragraph in fact shows a little more: regarding now the canonical projection $\pi$ as a map of pairs $\pi : (N,N \backslash W^s_{\rm loc}(N)) \longrightarrow (N/L,N/L \backslash W^s_{\rm loc}(N))$ and similarly for $\pi'$, there is a commutative diagram \[\xymatrix{(f^{n}(U), f^{n}(U \backslash Z_0)) \ar[r]^-{f^k} \ar[d]_{\pi} & (f^{n+k}(U),f^{n+k}(U \backslash Z_0)) \ar[d]_{\pi'} \\ (N/L,N/L \backslash W^s_{\rm loc}(N)) \ar[r]^-r & (N'/L',N'/L' \backslash W^s_{\rm loc}(N'))}\] We may enlarge this as follows: \[ \xymatrix{(U,U \backslash Z_0) \ar[d]_{f^{n}} \ar[r]^{\rm Id}& (U,U \backslash Z_0) \ar[d]_{f^{n+k}} \\ (f^{n}(U), f^{n}(U \backslash Z_0)) \ar[r]^-{f^k} \ar[d]_{\pi} & (f^{n+k}(U),f^{n+k}(U \backslash Z_0)) \ar[d]_{\pi'} \\ (N/L,N/L \backslash W^s_{\rm loc}(N)) \ar[r]^r & (N'/L',N'/L' \backslash W^s_{\rm loc}(N'))}\] Passing to homology we see the following. Given $z \in H_*(U,U \setminus Z_0)$, by definition of $\Omega$ we have $\Omega_{N/L}(z) = \phi_{N/L}^{-n} \circ \omega_{N/L}(\pi_* \circ f^n_*(z))$ and similarly $\Omega_{N'/L'}(z) = \phi_{N'/L'}^{-(n+k)} \circ \omega_{N'/L'}(\pi'_* \circ f^{n+k}_*(z))$. Because of the commutative diagram above, the latter is $\phi_{N'/L'}^{-(n+k)} \circ \omega_{N'/L'}(r_* \circ \pi_* \circ f^n_*(z))$. By Proposition \ref{prop:indep_omegaNL} this is $\phi_{N'/L'}^{-(n+k)} \circ r_* \circ \omega_{N/L}(\pi_* \circ f_*^n(z))$, which in turn is $\phi_{N'/L'}^{-(n+k)} \circ r_* \circ \phi_{N/L}^n \circ \Omega_{N/L}(z)$. By the equivariance of $r_*$ this is $\phi_{N'/L'}^{-(n+k)} \circ \phi_{N'/L'}^n \circ  r_* \circ \Omega_{N/L}(z) = \phi_{N'/L'}^{-k} \circ r_* \circ \Omega_{N/L}(z)$. Thus $\Omega_{N'/L'} = \phi_{N'/L'}^{-k} \circ r_* \circ \Omega_{N/L}$, as was to be shown.
\end{proof}

We conclude with a simple remark. \label{pg:remark} We have in fact constructed a \emph{family} of maps $\Omega$, one for each admissible pair $(Z,Z_0)$. It should be intuitively clear that these maps are all compatible with each other in the sense that if a chain is supported by two different pairs, computing its limit with one or the other will result in the same element of the Conley index. We will only make use of the following special case. If $(Z,Z_0)$ is an admissible pair and $Z' \subseteq Z$ is compact, then the pair $(Z',Z' \cap Z_0)$ is also admissible and there is a commutative square \[ \xymatrix{H_*(Z',Z' \setminus Z_0) \ar[r]^{\Omega'} \ar[d]_{j_*} & C H_*(N/L) \ar[d]^{\rm Id} \\ H_*(Z,Z \setminus Z_0) \ar[r]^{\Omega} & C H_*(N/L)} \] where the arrow $j_*$ is induced by the inclusion of the primed pair into the unprimed one and the primed and unprimed omegas denote the corresponding limit maps. This is straightforward to check: if $n$ and $U$ satisfy the admissibility conditions for $(Z,Z_0)$ then $n$ and $U' := U \cap Z'$ satisfy the admissibility conditions for $(Z',Z' \cap Z_0)$ and computing $\Omega = \Omega_{\{n,U\}}$ and $\Omega' := \Omega'_{\{n,U'\}}$ with these yields immediately the above diagram.

\section{The connection homomorphism of an attractor-repeller decomposition} \label{sec:ARdec}

Now we finally apply the ``emitter'' and ``receiver'' ideas developed above to obtain a description of the connection homomorphism of an attractor-repeller decomposition. We begin by recalling some definitions. Suppose $S$ is an isolated invariant set which contains two disjoint (and nonempty) isolated invariant sets $A$ and $R$ such that every orbit $(x_n)_{n \in \mathbb{Z}}$ in $S$ satisfies one of the following mutually exclusive conditions:
\begin{itemize}
	\item[(i)] $(x_n)_{n \in \mathbb{Z}}$ is entirely contained in $A$, or
	\item[(ii)] $(x_n)_{n \in \mathbb{Z}}$ is entirely contained in $R$, or
	\item[(iii)] $(x_n)_{n \in \mathbb{Z}}$ is disjoint from $A$ and $R$ and converges to $A$ as $n \rightarrow +\infty$ and to $R$ as $n \rightarrow -\infty$.
\end{itemize}

In case (iii) ``converges to $A$'' means that the orbit eventually enters (and remains in) any prescribed neighbourhood of $A$; similarly for $R$. The pair $\{A,R\}$ is called an attractor-repeller decomposition of $S$, and the orbits in case (iii) are called the connecting orbits (from $R$ to $A$).

As mentioned in the Introduction, for any attractor-repeller decomposition there is an exact sequence that relates the Conley indices of $A$, $R$ and $S$. There exists a triple of compact spaces $N_0 \supseteq N_1 \supseteq N_2$ such that $(N_0,N_2)$ is a filtration pair for $S$, $(N_0,N_1)$ is a filtration pair for $R$, and $(N_1,N_2)$ is a filtration pair for $A$ (see \cite[Section 7, pp. 3319ff.]{franksricheson1}). Consider the triple $N_0/N_2 \supseteq N_1/N_2 \supseteq N_2/N_2$. Its long exact sequence reads \[\xymatrix{\ldots \ar[r] & H_*(N_1/N_2,[N_2]) \ar[r] & H_*(N_0/N_2,[N_2]) \ar[r] & H_*(N_0/N_2,N_1/N_2) \ar[r] & \ldots}\] where the unlabeled arrows are inclusion induced. By the strong excision property of \v{C}ech homology there is an isomorphism $H_*(N_0/N_2,N_1/N_2) = H_*(N_0/N_1,[N_1])$ induced by the canonical projection $(N_0/N_2)/(N_1/N_2) \longrightarrow N_0/N_1$ and so we may rewrite the above as \[\xymatrix{\ldots \ar[r] & H_*(N_1/N_2,[N_2]) \ar[r] & H_*(N_0/N_2,[N_2]) \ar[r] & H_*(N_0/N_1,[N_1]) \ar[r]^-{\Gamma} & \ldots}\] The space $N_0/N_2$ is endowed with the dynamics given by $f_{N_0/N_2}$. Its subset $N_1/N_2$ is positively invariant and $f_{N_0/N_2}$ coincides there with $f_{N_1/N_2}$. Finally, under the identification $(N_0/N_2)/(N_1/N_2) = N_0/N_1$ the map $f_{N_0/N_2}$ becomes $f_{N_0/N_1}$. Thus in the preceding sequence each group is endowed with its corresponding $(f_{N_i/N_j})_*$ and all the arrows are equivariant. Passing to the algebraic parts, by Proposition \ref{prop:exact} we have an exact sequence \[ \xymatrix{\ldots \ar[r] & CH_*(N_1/N_2) \ar[r] & CH_*(N_0/N_2) \ar[r] & CH_*(N_0/N_1) \ar[r]^-{\Gamma} & \ldots}\] with the same arrows. The map $\Gamma$ is the connection homomorphism of the attractor-repeller decomposition.\footnote{Strictly speaking $\Gamma$ depends on the triple $(N_0,N_1,N_2)$, but in fact it is an ``expression in coordinates'' of a morphism that depends only on the attractor-repeller decomposition. Formalizing this is not completely straightforward; for the case of flows see for example \cite[Theorem 5.7, p. 23]{salamon1} and the references therein.} This sequence is dual to that obtained by Mrozek (\cite{mro1}), in a very general context, for the cohomological Conley index.

\subsection{} To provide an interpretation of $\Gamma$ in dynamical terms fix filtration pairs $(N_R,L_R)$ and $(N_A,L_A)$ for the repeller and the attractor respectively. Denote by $F$ the corresponding domain of the local unstable manifold $W^u_{\rm loc}(N_R)$. (Recall that this is defined as $F := W^u_{\rm loc}(N_R) \cap L_R$). Let $\tau : H_*(F) \longrightarrow H_*(F,F \setminus S)$ be the map induced by the inclusion; as already mentioned its geometric effect is to ``localize'' cycles in $F$ around $S$. With these notations we can now state the following:

\begin{theorem} \label{teo:connecting} For any sufficiently small $N_R$ the pair $(F,F \cap S)$ is admissible for $A$, so that there is a well defined limit map $\Omega : H_*(F,F \setminus S) \longrightarrow CH_*(N_A/L_A)$. The connection homomorphism $\Gamma_q$ is, up to isomorphisms induced by shift equivalences, equal to the composition \[\xymatrix{CH_q(N_R/L_R) \ar[r]^-{\partial_F} & H_{q-1}(F) \ar[r]^-{\tau} & H_{q-1}(F,F \backslash S) \ar[r]^-{\Omega} & CH_{q-1}(N_A/L_A)}\]
\end{theorem}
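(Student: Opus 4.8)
The plan is to work throughout with one concrete triple $N_0 \supseteq N_1 \supseteq N_2$ adapted to the attractor--repeller decomposition, compute $\Gamma$ in those coordinates, and then invoke the invariance results (Proposition \ref{prop:W2invariant}, Lemma \ref{lem:include_bdry}, Proposition \ref{prop:indep_omegaNL}, Theorem \ref{teo:indepOmegaNL}) to transport the answer to the given filtration pairs $(N_R,L_R)$ and $(N_A,L_A)$. So first I would replace $(N_R,L_R)$ by $(N_0,N_1)$ and $(N_A,L_A)$ by $(N_1,N_2)$; the point is that with this choice $F = W^u_{\rm loc}(N_0) \cap N_1$ and one can see directly, from the fact that $N_1$ is a filtration pair for $R$ and the connecting orbits run from $R$ down to $A \subseteq N_1$, that $F \cap S$ is carried by the dynamics into $N_1$ and the remaining admissibility conditions (A1), (A2) hold for a suitable $n$ once $N_0$ (equivalently $N_R$) is chosen small enough; this is where ``for any sufficiently small $N_R$'' enters. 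Once the result is established for the special triple, Lemma \ref{lem:include_bdry} handles the change from $(N_0,N_1)$ to $(N_R,L_R)$ on the emitter side (after possibly intersecting/enlarging to get an inclusion of filtration pairs, as is standard), and Theorem \ref{teo:indepOmegaNL} together with the compatibility remark on p.~\pageref{pg:remark} handles the change from $(N_1,N_2)$ to $(N_A,L_A)$ on the receiver side, absorbing the unavoidable $\phi^{-k}$ and $r_*$ factors into the phrase ``up to isomorphisms induced by shift equivalences''.

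The heart of the argument is then to identify $\Gamma = \Omega \circ \tau \circ \partial_F$ in the fixed coordinates. I would unwind the definition of $\Gamma$ as the composition of (i) the connecting map of the long exact sequence of the triple $N_0/N_2 \supseteq N_1/N_2 \supseteq [N_2]$, restricted to algebraic parts, together with (ii) the strong-excision identification $H_*(N_0/N_2,N_1/N_2) \cong H_*(N_0/N_1,[N_1])$. The boundary map of a triple factors through the absolute boundary: an element of $H_q(N_0/N_1,[N_1]) = CH_q(N_R/L_R)$, viewed via the excision isomorphism as living in $H_q(N_0/N_2, N_1/N_2)$, is sent by the connecting homomorphism to the image in $H_{q-1}(N_1/N_2,[N_2])$ of its boundary in $H_{q-1}(N_1/N_2)$. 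The plan is to represent the class by an algebraic cycle supported in the local unstable manifold $W^u_{\rm loc}(N_0)/F$ — this is exactly what Proposition \ref{prop:algebraic_rep} provides, and it is the same representative that computes $\partial_F$ — so that its boundary is a cycle in $F = W^u_{\rm loc}(N_0) \cap N_1$, i.e. precisely $\partial_F$ of the class. That boundary cycle, pushed into $N_1/N_2$ and then into $(N_1/N_2, N_1/N_2 \setminus W^s_{\rm loc})$, is what the $\omega$-limit machinery of $A$ acts on; the claim is that passing from $H_{q-1}(F)$ to $H_{q-1}(F, F\setminus S)$ (the map $\tau$) and then applying $\Omega$ reproduces the $\omega_{N_1/N_2}$-limit of the boundary cycle, because $F \cap S = F \cap W^s(A)$ is exactly the locus of $F$ that is eventually carried into $W^s_{\rm loc}(N_1)$ while its complement is flushed to $[N_2]$. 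Diagrammatically this should be a matter of assembling: the boundary-of-a-triple square, the excision square for $F \hookrightarrow N_1/N_2$, the inclusion $(N_1/N_2,[N_2]) \hookrightarrow (N_1/N_2, N_1/N_2 \setminus W^s_{\rm loc})$ from the definition of $\omega_{N_1/N_2}$, and the definition of $\Omega$ as $\phi^{-n}\omega \pi_* f^n_*$ applied with $n$ the admissibility time; naturality of the long exact sequence and equivariance make each square commute.

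The main obstacle I anticipate is the careful bookkeeping of supports and of the ``trimming'' implicit in $\tau$: one must check that the boundary cycle of the algebraic unstable representative, once restricted to an infinitesimal neighbourhood of $F \cap S$ inside $F$, is genuinely an admissible chain for $A$ and that its $\Omega$-limit equals the class it defines in $H_{q-1}(N_1/N_2,[N_2])$ after enough forward iteration (so that $\omega_{N_1/N_2}$ simply reads it off the canonical decomposition). Concretely one needs that a point of $W^u_{\rm loc}(N_0)\cap N_1$ lying on a connecting orbit is, after finitely many steps, inside $W^s_{\rm loc}(N_1)$, whereas a nearby point off $S$ leaves through $N_2$ — this is exactly conditions (A1)--(A2) and is where the smallness of $N_0$ (hence $N_R$) is used, via an argument like the diagonal/absorption arguments already used in the proofs of Proposition \ref{prop:algebraic_rep} and Proposition \ref{prop:can_desc_omega}. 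Everything else — the passage to algebraic parts, the interchange of $\phi^{-n}$ with $r_*$, the reduction to the special triple — is routine given the machinery assembled in Sections \ref{sec:algebraprelim}--\ref{sec:receivers}, and the compatibility remark on p.~\pageref{pg:remark} lets one shrink $F$ to whatever neighbourhood of $F\cap S$ is convenient without changing $\Omega$.
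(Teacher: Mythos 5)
Your proposal is correct and follows essentially the same route as the paper: prove the identity for the special triple $(N_0,N_1,N_2)$ by comparing the long exact sequence of $(W^u_{\rm loc}(N_0),F,\emptyset)$ with that of $(N_0,N_1,N_2)$ via Proposition \ref{prop:algebraic_rep} and the definition of $\omega_{N_1/N_2}$, then transport to general $(N_R,L_R)$ and $(N_A,L_A)$ via Lemma \ref{lem:include_bdry}, the compatibility remark, and Theorem \ref{teo:indepOmegaNL}. The only small discrepancy is that for the special triple admissibility holds outright with $n=0$ and $U=F$ (no smallness needed); the ``sufficiently small'' hypothesis enters only when passing to a general $N_R\subseteq N_0$, where the paper enlarges $N_1$ to $N_1^{(k)}$ so as to absorb $L_R$ --- the step you flag as ``standard intersecting/enlarging''.
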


As usual, the sentence ``up to isomorphisms induced by shift equivalences'' should be interpreted as reflecting the choice of ``coordinate systems'' for $R$ and $A$ afforded by $N_R$ and $N_A$. In the Morse equations it is only ${\rm dim}\ \Gamma_q$ what appears anyway, and by the theorem one has $\dim {\rm im}\ \Gamma_q = \dim {\rm im}\ (\Omega \circ \tau \circ \partial_F)$ regardless of those ``coordinate change'' isomorphisms.

According to the intuitive interpretation of the maps $\partial_F$ and $\Omega$ from the preceding sections, $\Gamma_q$ works as follows. Let $z$ be an element in $CH_q(N_R/L_R)$. We can represent it by a relative $q$--cycle in the local unstable manifold of $R$; its boundary $\partial_F(z)$ is a $(q-1)$--cycle in $F$. By regarding $\partial_F(z)$ as an element of $H_{q-1}(F,F \setminus S)$ via the map $\tau$ we trim it down to a very small neighbourhood of $F \cap S$. Then acting on it with the dynamics will eventually carry it into $N_A$; further iteration will eventually produce an element in $CH_*(N_A/L_A)$ which is $\Gamma_{q-1}(z)$. Before proving the theorem we include some examples to illustrate how these ideas allow, in favorable circumstances, to compute $\Gamma$ just by inspection.

\begin{example} The standard result that if there are no connecting orbits from $R$ to $A$ then $\Gamma = 0$ follows inmediately from the theorem: in that situation $F \cap S = \emptyset$ and so $H_*(F,F \setminus S) = 0$. Thus $\tau = 0$ and so $\Gamma = 0$.
\end{example}

\begin{example} Suppose that $Z_0 := F\cap S$ can be decomposed as the disjoint union of finitely many compact sets $Z_{01},\ldots,Z_{0n}$. Let $Z$ be a compact neighbourhood of $F \cap S$ in $F$ constructed by taking the union of disjoint compact neighbourhoods $Z_i$ of each of the pieces $Z_{0i}$. Then by excision \[H_*(F, F \backslash S) = H_*(Z, Z \backslash S) = \oplus_{i=1}^n H_*(Z_i,Z_i \backslash Z_{0i}).\] The maps $\tau$ and $\Omega$ can then be written as sums $\tau = \sum_{i=1}^n \tau_i$ and $\Omega = \sum_{i=1}^n \Omega_i$ where each \[\tau_i : H_*(F) \longrightarrow H_*(Z_i,Z_i \backslash S)\] and each \[\Omega_i : H_*(Z_i,Z_i \backslash S) \longrightarrow C H_*(N_A/L_A)\] is just the $\Omega$-map corresponding to the admissible pair $(Z_i,Z_i \cap S)$. Consequently the connection homomorphism $\Gamma$ is a sum \[\Gamma = \sum_{i=1}^n \Omega_i \circ \tau_i \circ \partial_F.\]

Now suppose the set $S$ is the union of finitely many disjoint compact invariant sets $S_1, \ldots, S_n$ so that $S_i \cap S_j = A \cup R$ for $i \neq j$. Intuitively, the connecting orbits between $A$ and $R$ can be bundled into disjoint packages indexed by $i$. In this setting evidently $F \cap S$ decomposes as the disjoint union of the compact sets $F \cap S_i$ and we have the sum decomposition $\Gamma = \sum_{i=1}^n \Omega_i \circ \tau_i \circ \partial_F$ derived above. However, each $S_i$ is itself an isolated invariant set having the attractor-repeller decomposition $\{A,R\}$ and the connection map of that sequence is exactly $\Omega_i \circ \tau_i \circ \partial_F$. Thus in this setting the connection map for the attractor-repeller decomposition $\{A,R\}$ of $S$ is the sum of the connection maps for the attractor-repeller decomposition $\{A,R\}$ of the $S_i$. This generalizes \cite[Theorem 2.5, p. 199]{mccord2} to discrete, possibly noninvertible, dynamical systems.
\end{example}

For the next example we need to recall a definition. Suppose the phase space is a differentiable manifold and $f$ is a $\mathcal{C}^1$ diffeomorphism. A fixed point $p$ is called hyperbolic if the derivative $D_p f$ has no (complex) eigenvalues of modulus $1$. The number $d$ of eigenvalues of $D_p f$ with modulus greater than one, counted with multiplicity, is called the index of $p$. By the Hartman-Grobman theorem the dynamics near $p$ is conjugate via a homeomorphism to that of $D_p f$ (near $0$), and using this model one can construct a filtration pair $(N,L)$ for $p$ where $N$ is a box and $L$ is a thickening of some of the faces of the box; namely those transverse to the unstable direction. (Figure \ref{fig:receiver1}.(b) is an example in the case $d = 1$). For such a filtration pair $W^u_{\rm loc}$ is a $d$--dimensional disk and $F$ is a thickening of its boundary. The quotient $W^u_{\rm loc}/F$ is (up to homotopy) a $d$--sphere, and $f_{N/L}$ acts on it by pushing points away from $p$ and moving them towards $[F]$. We can work with $\mathbb{Z}$ coefficients and the Conley index $CH_*(N/L)$ is nonzero only in dimension $d$, where it is $(\mathbb{Z},\pm {\rm Id})$. (For a more careful derivation via filtration pairs see \cite[Theorem 3.1, p. 156]{mro2}).

Consider an attractor-repeller decomposition where the repeller $R$ is a hyperbolic fixed point. For ease of visualization let phase space be a $3$--manifold and $R$ have index $2$. The local situation around $R$ is (up to homeomorphism, again by the Hartman-Grobman theorem) as depicted in Figure \ref{fig:example}.(a). The drawing shows a dotted $2$--disk $g_R$ which represents a generator $z_R$ of $CH_2(R)$. The only possibly nonzero connection homomorphism is $\Gamma_2 : CH_2(R) \longrightarrow CH_1(A)$, and to describe it we need to compute $\Gamma_2(z_R) = \Omega \circ \tau \circ \partial_F(z_R)$. Since $g_R$ is already contained in the local unstable manifold of $N_R$ the map $\partial_F(z_R)$ simply returns the boundary of the disk; i.e. (a fundamental class of) the $1$--sphere $\partial g_R$ shown as a jagged curve in the drawing. Thus to compute $\Gamma_2(z_R)$ we only need to take a sufficiently small neighbourhood of $\partial g_R \cap S$ and act on it with $\Omega$. This is illustrated in the following example:

\begin{example} Suppose that $A$ is also a hyperbolic fixed point with index $1$ (if it has a different index then $\Gamma_2 = 0$ automatically). The local picture near $A$ is as in Figure \ref{fig:receiver1}.(a). Assume, as in the discussion by McCord in \cite[Section 3, p. 200ff.]{mccord2}, that $W^s(A)$ intersects $W^u(R)$ transversally. Then (by dimension counting) this intersection consists only of finitely many arcs $\gamma_i$ joining $R$ to $A$, and $S$ is the union of the closures $\overline{\gamma}_i = R \cup \gamma_i \cup A$ of some of these arcs, say $\overline{\gamma}_1,\ldots,\overline{\gamma}_n$. Here $\partial g_R \cap S$ consists just of the $n$ points where the $\gamma_i$ intersect $\partial g_R$, and so $\tau \circ \partial_F(z_R)$ consists of $n$ short arcs which are transverse to the stable manifold of $A$ since they are contained in the unstable manifold of $R$. Figure \ref{fig:example}.(b) illustrates this for $\gamma_1$. When any one of these short arcs evolves in time and approaches $A$ it looks like the arc $c'$ depicted in Figure \ref{fig:receiver3} because it is still transverse to $W^s(A)$, so $\Omega$ acting on it will produce $\pm z_A$, where $z_A$ is a generator of $CH_1(A)$. Thus $\Gamma_2(z_R) = \left( \sum_{i=1}^n \epsilon_i \right) z_A$, where each $\epsilon_i = \pm 1$. If we had a more detailed description of how $S$ sits in $W^s(A)$ we could determine the relative signs of the $\epsilon_i$ as follows. The dotted $2$--chain $g_R$ which represents $z_R$ is in fact oriented and gives $\partial g_R$ an orientation. This is inherited, after the map $\tau$ acts, by the $n$ short arcs into which $\tau$ splits $\partial g_R$. When the arcs propagate under the dynamics and arrive in $N_A$ this orientation determines whether they are homologous to $z_A$ or $-z_A$, ultimately depending on whether there is some amount of twisting of $W^s(A)$ around $S$.    
\end{example}

The simplicity of these computations stems from the decoupling of $\Gamma$ into the local behaviour near $A$ and $R$ (captured by $\partial$ and $\Omega$ and well understood by linearization) and the geometric map $\tau$. In fact, one can easily adapt the analysis to other situations where $W^u(R)$ and $W^s(A)$ do not intersect transversally since that only changes the map $\tau$ but not $\partial$ or $\Omega$:

\begin{example} \label{ex:notrans} Returning to Figure \ref{fig:example}.(b), imagine that the portion of $W^s(A)$ depicted there was actually tangent to $W^u(R)$ along $\gamma_1$ and laid locally entirely above $F$. Then when the short arc of $\tau \circ \partial_F(z_R)$  localized around $\gamma_1$ approaches $A$ (upon iteration with the dynamics) it is tangent to $W^s(A)$ and lies entirely to one side of it so the map $\Omega$ sends it to zero. Allowing these more general patterns of intersection we still have that $\Gamma_2$ is multiplication by $\sum_{i=1}^n \epsilon_i$ but some of the $\epsilon_i$ may be zero.
\end{example}

\begin{figure}[h!]
\null\hfill
\subfigure[]{
\begin{pspicture}(0,0)(5,7)
	\rput[bl](0,0){\scalebox{0.8}{\includegraphics{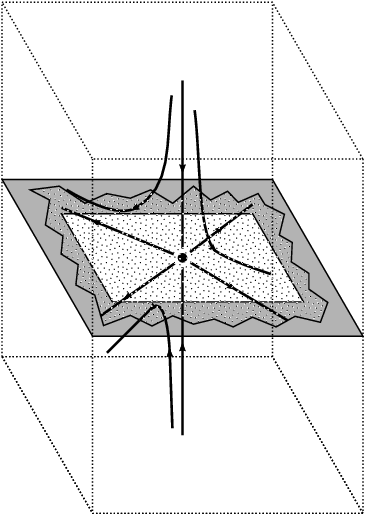}}}
	\rput[bl](0,0.5){$N_R$}
	\rput[tl](4.4,2.2){$F$}
\end{pspicture}}
\hfill
\subfigure[]{
\begin{pspicture}(0,0)(6.5,7)
	\rput[bl](0,1){\scalebox{0.8}{\includegraphics{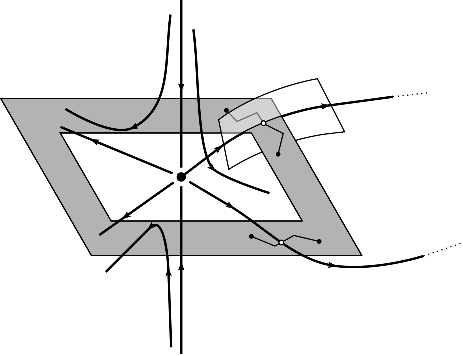}}}
	\rput(4.4,5){$W^s(A)$}
        \rput[bl](5.2,4){$\gamma_1$}
        \rput[bl](5.2,1.8){$\gamma_i$}
\end{pspicture}}
\hfill\null
\caption{ \label{fig:example}}
\end{figure}

All these computations obviously work the same in any dimension (and for any index $d$) and generalize the two corollaries of \cite[Theorem 3.1, p. 200]{mccord2} to discrete dynamics and also situations when $W^s(A)$ and $W^u(R)$ are not transverse. The theorem itself can be recovered as a particular case: if $S$ consists of just one connecting orbit (i.e. $n = 1$) then $\Gamma_d$ is an isomorphism (multiplication by $\epsilon = \pm 1$) and it follows from the exact sequence of the attractor-repeller decomposition that $CH_*(S) = 0$.

\subsection{Proof of Theorem \ref{teo:connecting}} As in the derivation of the exact sequence of the attractor-repeller at the beginning of this section, let $(N_0,N_1,N_2)$ be a filtration adapted to the decomposition; i.e. $(N_0,N_2)$, $(N_0,N_1)$ and $(N_1,N_2)$ are filtration pairs for $S$, $R$ and $A$ respectively. We prove the theorem in three cases of increasing generality.
\smallskip

{\it Case 1.} Consider the particular case when $(N_R,L_R) = (N_0,N_1)$ and $(N_A,L_A) = (N_1,N_2)$. Denote by $W^u_{\rm loc}(N_0)$ the local unstable manifold of $N_0$, so that $F$ is its intersection with $N_1$.
\medskip

{\it Claim.} Consider the filtration pair $(N_1,N_2)$ for $A$. Then $({F},{F} \cap S)$ satisfies the admissibility conditions with $n = 0$ and $U = F$.

{\it Proof of claim.} First, since ${F} \cap S$ is a subset of $S$, we have $f^n({F} \cap S) \subseteq S \subseteq N_0 \setminus N_2$ for every $n \geq 0$. But ${F} \cap S$ is a subset of $N_1$ by definition and $N_1$ is positively invariant in $N_0$, so $f^n({F} \cap S) \subseteq N_1 \setminus N_2$ for every $n \geq 0$. Thus ${F} \cap S \subseteq W^s_{\rm loc}(N_1)$. To show that ${F} \backslash ({F} \cap S) \subseteq N_1 \backslash W^s_{\rm loc}(N_1)$ we only need to check that if $x \in F$ belongs to $W^s_{\rm loc}(N_1)$ then it belongs to $S$. This is indeed true: $x$ has a negative semiorbit $(x_n)_{n \geq 0}$ contained in $N_0$ because $F \subseteq W^u_{\rm loc}(N_0)$, and its forward semiorbit $(x_n)_{n \geq 0}$ is contained in $N_1 \setminus N_2$. Since $N_2$ is positively invariant in $N_0$, the negative semiorbit $(x_n)_{n \geq 0}$ must actually be contained in $N_0 \setminus N_2$ since otherwise the positive semiorbit would be contained in $N_2$. Thus $x \in {\rm Inv}\ \overline{N_0 \setminus N_2} = S$.
\medskip

Since for the particular filtration pair $(N_1,N_2)$ the admissibility conditions are satisfied with $n = 0$ and $U = F$, we may express the map $\Omega_{N_1/N_2}$ as \[\xymatrix{H_*(F,F \backslash S) \ar[r]^-{\pi_*} & H_*(N_1/N_2,N_1/N_2 \backslash W^s_{\rm loc}(N_1)) \ar[r]^-{\omega_{N_1/N_2}} & CH_*(N_1/N_2)}\] where $\pi$ accounts for the inclusion of $(F, F \setminus S)$ into $(N_1,N_1 \backslash W^s_{\rm loc}(N_1))$ followed by the quotient map.

Consider the triples $(W^u_{\rm loc}(N_0),{F},\emptyset) \subseteq (N_0,N_1,N_2)$ and the following portion of their long sequences: \[\xymatrix{H_q(N_0,N_1) \ar[r]^-{\partial} & H_{q-1}(N_1,N_2) \\ H_q(W^u_{\rm loc}(N_0),F) \ar[r]^-{\partial} \ar[u] & H_{q-1}(F) \ar[u]}\] where the two vertical arrows are induced by inclusions. If we now replace each of the three groups that involve pairs of spaces with a quotient (appealing to the strong excision property of \v{C}ech homology) the diagram turns into \[\xymatrix{H_q(N_0/N_1,[N_1]) \ar[r]^-{\Gamma_q} & H_{q-1}(N_1/N_2,[N_2]) \\ H_q(W^u_{\rm loc}(N_0)/F,[F]) \ar[u]^{j_*} \ar[r]^-{\Delta} & H_{q-1}({F}) \ar[u]}\] Now the left vertical arrow is induced by the inclusion $j : W^u_{\rm loc}/F \subseteq N_0/N_1$ and the right vertical arrow is induced by the inclusion of $F$ in $N_1$ followed by the canonical projection $N_1 \longrightarrow N_1/N_2$.


We enlarge the above diagram on the right: \[\xymatrix{ H_q(N_0/N_1,[N_1]) \ar[r]^-{\Gamma_q} & H_{q-1}(N_1/N_2,[N_2]) \ar[r]^-{i_*} &  H_{q-1}(N_1/N_2, N_1/N_2 \backslash W^s_{\rm loc}(N_1)) \\  H_q(W^u_{\rm loc}(N_0)/{F},[{F}]) \ar[u]^{j_*} \ar[r]^-{\Delta} & H_{q-1}({F}) \ar[u] \ar[r]^-{{\tau}} & H_{q-1}({F},{F} \backslash ({F} \cap S)) \ar[u]^{\pi_*} }\] The rightmost vertical arrow is the map $\pi$ induced by the inclusion $(F, F \backslash (F \cap S)) \subseteq (N_1,N_1 \backslash W^s_{\rm loc}(N_1))$ followed by the projection onto $N_1/N_2$ and so the right square commutes because it does so at the point set level already.

Take an element $z \in CH_q(N_0/N_1)$. By Proposition \ref{prop:algebraic_rep} there exists a unique algebraic element $w \in H_q(W^u_{\rm loc}(N_0)/F,[F])$ such that $j_*(w) = z$; recall that by definition $\partial_{F}(z) = \Delta(w)$. The commutativity of the diagram implies that \[i_*(\Gamma_q(z)) = \pi_* \circ \tau \circ \Delta(w) 
= \pi_* \circ \tau \circ \partial_{F}(z).\] Apply the map $\omega_{N_1/N_2}$ on both sides. Since $\Gamma_q$ is equivariant we have that $\Gamma_q(z) \in CH_{q-1}(N_1/N_2)$ and so $\omega_{N_1/N_2} \circ i_* = \omega$ leaves it unchanged. Thus \[\Gamma_q(z) = \omega_{N_1/N_2} \circ \pi_* \circ \tau \circ \partial_{F}(z).\] Finally, in this particular case $\Omega_{N_1/N_2} = \omega_{N_1/N_2} \circ \pi_*$ so we have $\Gamma_q = \Omega_{N_1/N_2} \circ \tau \circ \delta_F$.

\medskip

{\it Case 2.} Assume now that $N_R \subseteq N_0$ and $L_R$ is arbitrary. We still take $(N_A,L_A) = (N_1,N_2)$. Modify the pair $(N_0,N_1)$ as follows. For every $k \geq 0$ define $N_1^{(k)}$ to be the closure of the interior of \[\{x \in N_0 : f^k_{N_0/N_1}([x]) = [N_1]\}.\] Then $(N_0,N_1^{(k)})$ is a filtration pair for $R$ (this is standard; see for instance the proof of \cite[Theorem 4.3, p. 3311]{franksricheson1}). Its local unstable manifold is the same as the local unstable manifold of $N_0$, and we define the corresponding domains $F^{(k)} := W^u_{\rm loc}(N_0) \cap N_1^{(k)}$. Observe that these sets get bigger with increasing $k$ (as do the $N_1^{(k)}$). The same argument as in Case 1 shows that each pair $(F^{(k)},F^{(k)} \cap S)$ is admissible for $A$ (with $n = k$ and $U = F^{(k)}$ in the definition of admissibility), and so there are limit maps $\Omega^{(k)} : H_*(F^{(k)},F^{(k)} \setminus S) \longrightarrow CH_*(N_1/N_2)$. 
\smallskip

{\it Claim.} There exists $k$ such that $L_R \subseteq N_1^{(k)}$. In particular $(F,F \cap S)$ is admissible for $A$ since it is contained in the admissible pair $(F^{(k)},F^{(k)} \cap S)$.

{\it Proof of claim.} First observe that the forward orbit of any $x \in L_R$ must hit $N_1$ before it leaves $N_0$ (if it ever does). Indeed: If $x \not\in S$ then its forward orbit must exit $N_0$ through $N_2$, so it must first hit $N_1$; if $x \in S$ then its forward orbit never leaves $N_0$ and, since $x \not\in R$, converges to $A$ and so must eventually enter $N_1$ since the latter is a neighbourhood of $A$ in $N_0$. This shows that $L_R \subseteq \bigcup_{k \geq 0} N_1^{(k)}$, and (for the same reason as in the argument preceding the proof of Proposition \ref{prop:can_desc_omega}) in fact $L_R \subseteq \bigcup_{k \geq 0} {\rm int}\ N_1^{(k)}$. Since $L_R$ is compact, there exists $k$ such that $L_R \subseteq N_1^{(k)}$. $_{\blacksquare}$
\smallskip

Now we have $(N_R,L_R) \subseteq (N_0,N_1^{(k)}) \supseteq (N_0,N_1)$. Let $r : N_R/L_R \longrightarrow N_0/N_1^{(k)}$ and $r' : N_0/N_1 \longrightarrow N_0/N_1^{(k)}$ be the natural shift equivalences of Example \ref{ex:inclusion}. There is a diagram  \[\xymatrix{ CH_q(N_0/N_1) \ar[d]_{r'_*} \ar[r]^{\partial_{F^{(0)}}} & H_{q-1}(F^{(0)}) \ar[d] \\ CH_q(N_0/N_1^{(k)}) \ar[r]^{\partial_{F^{(k)}}}  & H_{q-1}(F^{(k)}) \\ CH_q(N_R/L_R) \ar[u]^{r_*} \ar[r]^{\partial_F} & H_{q-1}(F) \ar[u]}\] which commutes by Lemma \ref{lem:include_bdry}. This diagram can be enlarged on the right with the respective $\tau$ and $\Omega$ maps: \[\xymatrix{ CH_q(N_0/N_1) \ar[d]_{r'_*}^{\cong} \ar[r]^{\partial_{F^{(0)}}} & H_{q-1}(F^{0}) \ar[d] \ar[r]^-{\tau^{(0)}} & H_{q-1}(F^{(0)},F^{(0)} \backslash S) \ar[d] \ar[dr]^{\Omega^{(0)}} & \\ CH_q(N_0/N_1^{(k)}) \ar[r]^{\partial_{F^{(k)}}}  & H_{q-1}(F^{(k)}) \ar[r]^-{\tau^{(k)}} & H_{q-1}(F^{(k)}, F^{(k)} \backslash S) \ar[r]^{\Omega^{(k)}} & CH_{q-1}(N_1/N_2) \\ CH_q(N_R/L_R) \ar[u]^{r_*}_{\cong} \ar[r]^{\partial_F} & H_{q-1}(F) \ar[u] \ar[r]^-{\tau} & H_{q-1}(F,F \setminus S) \ar[u] \ar[ur]^{\Omega} & }\] This is again commutative because of the remark in p. \pageref{pg:remark} following the proof of Theorem \ref{teo:indepOmegaNL}. The maps $r_*$ and $r'_*$ restricts to isomorphisms between the Conley indices by Proposition \ref{prop:W2invariant}. From Case 1 we know that the connection homomorphism $\Gamma_q : CH_q(N_0/N_1) \longrightarrow CH_{q-1}(N_1/N_2)$ is the composition $\Omega^{(0)} \circ \tau^{(0)} \circ \partial_{F^{(0)}}$. By the commutativity of the diagram we then have \[\Gamma_q = \Omega^{(k)} \circ \tau^{(k)} \circ \partial_{F^{(k)}} \circ r'_* = \Omega \circ \tau \circ \partial_F \circ (r_*^{-1} \circ r'_*).\] Let $s$ be a shift inverse for $r$; it satisfies $s_* \circ r_* = (f_{N_R/L_R})_*^m$ for some integer $m \geq 0$. When restricted to $CH_*(N_R/L_R)$ all maps become isomorphisms, and so $r_*^{-1} = \phi_{N_R/L_R}^{-m} \circ s_*$ is the inverse of $r_*$. Thus \[r_*^{-1} \circ r'_* = \phi_{N_R/L_R}^{-m} \circ (s \circ r')_* = (s \circ r')_* \circ \phi_{N_0/L_0}^{-m}\] and so \[\Gamma_q \circ \phi_{N_0/L_0}^m = (\Omega \circ \tau \circ \partial_F) \circ (s \circ r')_*\] which shows that $\Gamma_q$ equals the composition $\Omega \circ \tau \circ \partial_F$ save for the isomorphisms induced by the shift equivalences $f_{N_0/N_1}^m : N_0/N_1 \longrightarrow N_0/N_1$ (a translation of the origin of times) and $s \circ r' : N_0/N_1 \longrightarrow N_R/L_R$.
\medskip

{\it Case 3.} This is the end of the proof. One can remove the condition $(N_A,L_A) = (N_1,N_2)$ simply by using Theorem \ref{teo:indepOmegaNL}. This only adds a shift in the origin of times and an isomorphism induced by a shift equivalence. The theorem is then proved for any choice of $(N_R,L_R)$ and $(N_A,L_A)$ as long as $N_R \subseteq N_0$, which is the ``sufficiently small'' condition in the statement of the theorem.

\bibliographystyle{plain}

\bibliography{biblio}

\end{document}